\def\inte#1{
\displaystyle\mathop{#1\kern0pt}^\circ }
\def\eqdefa{\buildrel\hbox{\footnotesize def}\over =}
\newcommand{\beqo}{\begin{equation*}}
\newcommand{\eeqo}{\end{equation*}}
\newcommand{\beno}{\begin{eqnarray*}}
\newcommand{\eeno}{\end{eqnarray*}}
\numberwithin{equation}{section}
\let\pa=\partial
\let\al=\alpha
\let\b=\beta
\let\r=\rho
\let\f=\frac
\let\p=\psi
\let\D=\Delta
\let\Om=\Omega
\let\tri=\triangle
\let\ep=\varepsilon
\def\cC{{\mathcal C}}
\def\cH{{\mathcal H}}
\def\cM{{\mathcal M}}
\def\cR{{\mathcal R}}
\def\virgp{\raise 2pt\hbox{,}}
\def\cdotpv{\raise 2pt\hbox{;}}
\def\eqdefa{\buildrel\hbox{\footnotesize def}\over =}
\def\C{\mathop{\mathbb C\kern 0pt}\nolimits}
\def\DD{\mathop{\mathbb D\kern 0pt}\nolimits}
\def\EE{\mathop{{\mathbb E \kern 0pt}}\nolimits}
\def\K{\mathop{\mathbb K\kern 0pt}\nolimits}
\def\N{\mathop{\mathbb N\kern 0pt}\nolimits}
\def\Q{\mathop{\mathbb Q\kern 0pt}\nolimits}
\def\R{\mathop{\mathbb R\kern 0pt}\nolimits}
\def\SS{\mathop{\mathbb S\kern 0pt}\nolimits}
\def\ZZ{\mathop{\mathbb Z\kern 0pt}\nolimits}
\def\T{\mathop{\mathbb T\kern 0pt}\nolimits}
\def\P{\mathop{\mathbb P\kern 0pt}\nolimits}
\def\I{\mathop{\mathbb I\kern 0pt}\nolimits}
\def\dive{\mathop{\rm div}\nolimits}
\def\no{\noindent}
\def\na{\nabla}
\def\p{\partial}
\newcommand{\beq}{\begin{equation}}
\newcommand{\eeq}{\end{equation}}
\newcommand{\ben}{\begin{eqnarray}}
\newcommand{\een}{\end{eqnarray}}
\newcommand{\andf}{\quad\hbox{and}\quad}
\newtheorem{defi}{Definition}[section]
\newtheorem{thm}{Theorem}[section]
\newtheorem{lem}{Lemma}[section]
\newtheorem{rmk}{Remark}[section]
\newtheorem{prop}{Proposition}[section]
\begin{document}

\title[Decay estimates of incompressible inhomogeneous NS equations]
{ Decay estimates of global solution to 2D incompressible
inhomogeneous Navier-Stokes equations with variable viscosity }
\author[J. Huang]{Jingchi Huang}\address[J. HUANG]
{Academy of Mathematics $\&$ Systems Science, Chinese Academy of
Sciences, Beijing 100190, P. R. CHINA} \email{jchuang@amss.ac.cn}
\author[M. PAICU]{Marius Paicu}
\address [M. PAICU]
{Universit\'e  Bordeaux 1\\
 Institut de Math\'ematiques de Bordeaux\\
F-33405 Talence Cedex, France}
\email{marius.paicu@math.u-bordeaux1.fr}
\date{20/Nov/2012}
\maketitle
\begin{abstract}
In this paper, we investigate the time decay behavior to Lions weak
solution of 2D incompressible inhomogeneous Navier-Stokes equations.
\end{abstract}

\noindent {\sl Keywords:} Inhomogeneous  Navier-Stokes equations,
Decay estimates. \

\vskip 0.2cm

\noindent {\sl AMS Subject Classification (2000):} 35Q30, 76D03  \\

\setcounter{equation}{0}
\section{Introduction}

The main purpose of this paper is to investigate the decay estimates
for the global solutions of the following two-dimensional
incompressible inhomogeneous Navier-Stokes equations with viscous
coefficient depending on the density \beq\label{INS} \left\{
\begin{array}
{l} \displaystyle \pa_t\rho + \dive(\rho u) = 0, \qquad (t,x)\in\Bbb{R}^+\times\Bbb{R}^2,\\
\displaystyle \pa_t (\rho u) + \dive(\rho u \otimes u) - \dive(\mu(\rho)\cM(u)) + \nabla \Pi = 0, \\
\displaystyle \dive u = 0,
\end{array}
\right. \eeq where $\rho,$ $u=(u_1, u_2)$ stand for the density and
velocity of the fluid respectively, $\cM(u)=\na u +\na^T u,$ $\Pi$
is a scalar pressure function, and in general, the viscosity
coefficient $\mu (\rho)$ is a smooth positive function on
$[0,\infty).$ Such system describes a fluid which is obtained by
mixing two immiscible fluids that are incompressible and that have
different densities. It may also describe a fluid containing a
melted substance. One may check \cite{pl} for the detailed
derivation.

When $\mu(\rho)$ is independent of $\rho,$ i.e. $\mu$ is a positive
constant, and the initial density has a positive lower bound, Lady\v
zenskaja and Solonnikov  \cite{LS} first addressed the question of
unique solvability of \eqref{INS}. More precisely, they considered
the system \eqref{INS} in a bounded domain $\Om$ with homogeneous
Dirichlet boundary condition for $u.$ Under the assumption that
$u_0\in W^{2-\frac2p,p}(\Om)$ $(p>d)$ is divergence free and
vanishes on  $\p\Om$ and that $\r_0\in C^1(\Om)$ is bounded away
from zero, then they \cite{LS} proved
\begin{itemize}
\item Global well-posedness in dimension $d=2;$
\item Local well-posedness in dimension $d=3.$ If in addition $u_0$ is small in $W^{2-\frac2p,p}(\Om),$
then global well-posedness holds true.
\end{itemize}
Danchin \cite{danchin} proved similar well-posedness result of
\eqref{INS} in the whole space case and the initial data in the
almost critical spaces. In particular, in two dimension, he proved
the global well-posedness of \eqref{INS} provided the initial data
$(\rho_0, u_0)$ satisfying $\rho_0-1\in H^{1+\al}(\R^2),$
$\rho_0\geq m>0,$ and $u_0\in H^\b(\R^2)$ for any $\al\in(0,1)$ and
$\b\in (0,1].$

In general, Lions \cite{pl} (see also the references therein) proved
the global existence of weak solutions to \eqref{INS} with finite
energy. Yet the uniqueness and regularities of such weak solutions
are big open questions even in two space dimensions. Except under
the additional assumptions that \beq\label{desjassume}
\|\mu(\rho_0)-1\|_{L^\infty(\T^2)}\leq \ep \andf u_0\in H^1(\T^2),
\eeq Desjardins \cite{desj} proved the following theorem.
\begin{thm}\label{desjthm}
{\sl Let $\rho_0 \in L^\infty(\T^2)$ and $\dive u_0=0.$ Then there
exists $\ep>0$ such that under the assumption \eqref{desjassume},
Lions weak solutions (\cite{pl}) to \eqref{INS} satisfy the
following regularity properties hold for all $T>0:$
\begin{enumerate}
\item
$u\in L^\infty((0,T); H^1(\T^2))$ and $\sqrt{\rho}\pa_t u\in
L^2((0,T)\times \T^2) ,$
\item
$\rho$ and $\mu(\rho)\in L^\infty((0,T)\times \T^2)\cap
C([0,T];L^p(\T^2))$ for all $p\in [1,\infty),$
\item
$\na (\Pi-\cR_i\cR_j(\mu \cM(u)_{ij}))$ and $\na (\P\otimes\Q(\mu
\cM(u)_{ij}))\in L^2((0,T)\times \T^2),$
\item
$\Pi$ may be renormalized in such a way that for some universal
constant $C_0>0,$ $\Pi$ and $\na u \in L^2((0,T); L^p(\T^2))$ for
all $p\in [4,p^*),$ where
$\f1{p^*}=2C_0\|\mu(\rho_0)-1\|_{L^\infty}.$
\end{enumerate}In which, we denote $\cR$ as the Riesz transform: $\cR
=\na \tri^{-\f12}.$ $\Q=\na \tri^{-1}\dive$ and $\P=\I-\Q$
respectively denote the projection on the space of curl-free and
divergence-free vector fields.}
\end{thm}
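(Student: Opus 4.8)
The plan is to establish the four properties as \emph{a priori bounds} on smooth solutions of a regularized problem (mollified data, regularized transport equation) and then pass to the limit; the limit is a Lions weak solution which, being more regular than a generic finite-energy one, inherits all the bounds. Two structural facts are used everywhere. First, $\rho$ and $\mu(\rho)$ are transported by the divergence-free field $u$, so $\pa_t\mu(\rho)+u\cdot\na\mu(\rho)=0$; hence $\|\rho(t)\|_{L^p}=\|\rho_0\|_{L^p}$ for all $p$ and, crucially, $\|\mu(\rho(t))-1\|_{L^\infty}=\|\mu(\rho_0)-1\|_{L^\infty}\le\ep$ for every $t$. Combined with DiPerna--Lions transport theory (the velocity lies in $L^2_tH^1$ and is divergence free), this already yields property (2), time continuity in $L^p$ included. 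Second, since $\rho$ has no positive lower bound we never divide by $\rho$: all gain of regularity is extracted by reading the momentum equation, at frozen time, as a stationary Stokes system.

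The heart of the matter is the pair of energy identities obtained by testing the momentum equation against $u$ and against $\pa_t u$. The first is classical,
$$\frac12\frac{d}{dt}\int_{\T^2}\rho|u|^2\,dx+\frac12\int_{\T^2}\mu(\rho)|\cM(u)|^2\,dx=0,$$
giving $\sqrt\rho\,u\in L^\infty_tL^2$ and $\na u\in L^2_tL^2$. Testing against $\pa_t u$, the pressure term drops since $\dive\pa_t u=0$, and one arrives at
$$\int_{\T^2}\rho|\pa_t u|^2\,dx+\frac14\frac{d}{dt}\int_{\T^2}\mu(\rho)|\cM(u)|^2\,dx=-\int_{\T^2}\rho(u\cdot\na u)\cdot\pa_t u\,dx+\frac14\int_{\T^2}\big(\pa_t\mu(\rho)\big)|\cM(u)|^2\,dx.$$
Substituting $\pa_t\mu(\rho)=-u\cdot\na\mu(\rho)$ and integrating by parts (using $\dive u=0$ to slip in a harmless $-1$), the last term becomes $\tfrac12\int_{\T^2}(\mu(\rho)-1)\,u\cdot\big(\cM(u):\na\cM(u)\big)\,dx$, which carries a second derivative of $u$ not directly controlled by the left side. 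This is the one genuinely delicate term.

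To close it one needs the elliptic estimate for the variable-viscosity Stokes operator, which is the real engine. At frozen time $u$ solves $-\dive(\mu(\rho)\cM(u))+\na\Pi=f$, $\dive u=0$, with $f:=-\rho\pa_t u-\rho\,u\cdot\na u$. Rewriting this as $-\Delta u+\na\Pi=f+\dive\big((\mu(\rho)-1)\cM(u)\big)$ and splitting off the gradient part via the Helmholtz--Leray projectors, one finds that $\na\big(\Pi-\cR_i\cR_j(\mu(\rho)\cM(u)_{ij})\big)$ coincides up to sign with $\Q f$ (and the quantity built from $\P$ obeys the analogous bound); since $f\in L^2_tL^2$ by property (1), this is property (3). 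For the $L^p$ statement one invokes the fact that the relevant Calder\'on--Zygmund/Riesz operators have $L^p(\T^2)$ operator norm $\lesssim C_0\,p$, so that the perturbative term satisfies $\|(\mu(\rho)-1)\cM(u)\|_{L^p}\le 2C_0\,p\,\|\mu(\rho_0)-1\|_{L^\infty}\,\|\na u\|_{L^p}$; when $p<p^*$ with $\frac1{p^*}=2C_0\|\mu(\rho_0)-1\|_{L^\infty}$ the constant $2C_0p\,\ep$ is $<1$ and this term is absorbed, leaving $\|\na u\|_{L^p}+\|\na^2u\|_{L^p}+\|\na\Pi\|_{L^p}\lesssim\|f\|_{L^p}$ with $\Pi$ chosen mean-free — this is property (4) after integrating in time and combining with property (1).

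Finally one feeds this elliptic estimate (with $p=2$, and with $p$ near $4$ where $W^{2,2}(\T^2)\hookrightarrow W^{1,4}(\T^2)$ makes the nonlinearities summable) back into the $\pa_t u$-identity. The bad term is bounded by $C\ep\,\|u\|_{L^4}\|\na u\|_{L^2}^{1/2}\|\na^2u\|_{L^2}^{3/2}$ and the convection term by $\|\rho\|_{L^\infty}^{1/2}\|\sqrt\rho\,\pa_t u\|_{L^2}\,\|u\|_{L^4}\|\na u\|_{L^4}$; using $\|\na^2u\|_{L^2}\lesssim\|f\|_{L^2}\lesssim\|\rho\|_{L^\infty}^{1/2}\|\sqrt\rho\,\pa_t u\|_{L^2}+\|\rho\|_{L^\infty}\|u\cdot\na u\|_{L^2}$ (plus a Young absorption of $\|u\cdot\na u\|_{L^2}$) to trade $\na^2u$ for $\sqrt\rho\,\pa_t u$, every term on the right is either reabsorbed into $\int\rho|\pa_t u|^2$ or takes the form $\Phi(t)\int\mu(\rho)|\cM(u)|^2$ with $\Phi\in L^1_t$ built from the basic energy. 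Gr\"onwall then gives $u\in L^\infty_tH^1$ and $\sqrt\rho\,\pa_t u\in L^2_tL^2$, i.e. property (1), for every $T>0$. I expect the two obstacles to be exactly those flagged above: making the absorption in the elliptic estimate rigorous when $\na\mu(\rho)$ is merely a distribution — it must be kept on the $\mu(\rho)-1$ side and tamed by its $L^\infty$-smallness against the $O(p)$ operator norms — and the book-keeping that turns the over-quadratic term $\ep\,\|\na^2u\|^{3/2}$ into an absorbable one after substituting the elliptic bound. These are precisely what force the restriction $p<p^*\sim\ep^{-1}$ and the hypothesis ``$u_0\in H^1$ and $\|\mu(\rho_0)-1\|_{L^\infty}$ small''.
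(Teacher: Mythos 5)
First, note that the paper itself does not prove Theorem~\ref{desjthm}: it is quoted verbatim from Desjardins \cite{desj}. The closest thing to a proof in this paper is Proposition~\ref{energyprop}, which explicitly ``follows the line of the proof of Theorem~\ref{desjthm}'' and reproduces its key mechanism; that is the natural benchmark for your attempt. Your overall architecture (regularize and pass to the limit; transport theory for property (2); the two energy identities; reading the momentum equation as a stationary Stokes system and absorbing the $(\mu(\rho)-1)\cM(u)$ perturbation using the $O(p)$ growth of the Calder\'on--Zygmund norms, which is exactly what produces $p^*\sim \ep^{-1}$ in property (4)) is the right skeleton and matches Desjardins.

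There is, however, a genuine gap in the step you yourself flag as delicate. After substituting $\pa_t\mu(\rho)=-u\cdot\na\mu(\rho)$ you integrate by parts so as to land the derivative on $|\cM(u)|^2$, producing a term containing $\na\cM(u)$, i.e.\ $\na^2u$, and you propose to control it through an elliptic estimate of the form $\|\na^2u\|_{L^p}+\|\na\Pi\|_{L^p}\lesssim\|f\|_{L^p}$. That estimate is not available here: with $\mu(\rho)$ merely bounded (no regularity on $\rho$), the perturbation $\dive\bigl((\mu(\rho)-1)\cM(u)\bigr)$ lives only in $W^{-1,p}$, so the absorption argument yields control of $\na u$ and of the \emph{combinations} $\na\bigl(\Pi-\cR_i\cR_j(\mu\cM(u)_{ij})\bigr)$ and $\na\bigl(\P\otimes\Q(\mu\cM(u)_{ij})\bigr)$ --- which is precisely why property (3) is phrased that way and why property (4) asserts $\na u\in L^2_tL^p$, not $\na^2u\in L^2_tL^p$. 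One cannot separate out $\na^2u\in L^2$ or $\na\Pi\in L^2$, so the chain $\ep\|u\|_{L^4}\|\na u\|_{L^2}^{1/2}\|\na^2u\|_{L^2}^{3/2}$ followed by $\|\na^2u\|_{L^2}\lesssim\|f\|_{L^2}$ collapses. The correct device (Desjardins', reproduced in the proof of Proposition~\ref{energyprop}) is never to isolate $\na^2u$: integrate by parts so that the divergence form operator appears \emph{as a whole}, $-\int\pa_t\mu(\rho)|\na u|^2=\int(u\cdot\na)u\cdot\dive(\mu(\rho)\cM(u))+\int\mu(\rho)\,\mbox{tr}(\na u\,\na u\,\cM(u))$, then substitute $\dive(\mu(\rho)\cM(u))=\rho u_t+\rho u\cdot\na u+\na\Pi$ from the equation. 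The resulting cubic terms are closed using the Gagliardo--Nirenberg bound $\|\na u\|_{L^4}\lesssim\|\na u\|_{L^2}^{1/2}\|\rho u_t+\rho u\cdot\na u\|_{L^2}^{1/2}$ (obtained from $\P\otimes\Q(\mu\cM(u))$ after the smallness absorption, cf.\ \eqref{gradu}), and the pressure contribution via the div--curl structure of $\pa_iu_j\pa_ju_i\in\cH^1$ paired with $\Pi-\cR_i\cR_j(\mu\cM(u)_{ij})\in BMO$. With that replacement your Gr\"onwall closure for property (1) goes through; without it the argument does not close.
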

In order to investigate the global well-posedness of thus solutions,
we first need to study the global-in-time type estimates. However,
because of the difficulties of the continuity equation in
\eqref{INS} being of hyperbolic nature and the estimate of the
diffusion term in the momentum equation, we shall first study the
time decay of the solutions, which is very much motivated by
\cite{gz, sch, wie}.

\begin{thm}\label{mainthm1} {\sl For $1<p<2,$ let $u_0 \in
L^p(\R^2)\cap H^1(\R^2),$ $\rho_0-1\in L^2(\R^2)$ and $\rho_0\in
L^\infty(\R^2)$ with a positive lower bound. We assume that $(\rho,
u, \na p)$ is a given Lions weak solution of \eqref{INS} with
initial data $(\rho_0, u_0).$ Denote $\mu(1)=\mu_0,$ then under the
assumption \beq\label{smallassume1}
\|\mu(\rho)-\mu_0\|_{L^\infty(\R^+;L^\infty(\R^2))}\leq \ep_0, \eeq
for a small constant $\ep_0,$ there exists a constant $C_1$ which
depends on $\|\rho_0-1\|_{L^2},$ $\|u_0\|_{L^p}$ and $\|u_0\|_{H^1}$
such that there hold \beq\label{timedecay1} \|u(t)\|_{L^2}^2\leq
C_1(t+e)^{-2\b(p)}, \qquad \|\na u(t)\|_{L^2}^2\leq C_1
(t+e)^{-1-2\b(p)+\ep},\eeq \beq\label{timedecay2} \int_0^\infty
\|u_t\|_{L^2} + \|\P\dive\bigl(\mu(\rho)\cM(u)\bigr)\|_{L^2}
+\|\Q\dive\bigl(\mu(\rho)\cM(u)\bigr)-\na\Pi\|_{L^2}\,dt \leq C_1,
\eeq \beq\label{timedecay3} \int_0^\infty
(t+e)^{1+2\b(p)-\ep}\Bigl(\|u_t\|_{L^2} +
\|\P\dive\bigl(\mu(\rho)\cM(u)\bigr)\|_{L^2}
+\|\Q\dive\bigl(\mu(\rho)\cM(u)\bigr)-\na\Pi\|_{L^2}\Bigr)^2\,dt
\leq C_1, \eeq with $\b(p)=\f12(\f2p-1)$ and any $\ep>0.$}
\end{thm}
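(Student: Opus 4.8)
\smallskip
\noindent\textbf{Reduction.} The plan is to combine Schonbek's Fourier splitting with the energy method, and first to reduce the three quantities in \eqref{timedecay2}--\eqref{timedecay3} to a single one. Applying the Leray projectors $\P$ and $\Q$ to the momentum equation of \eqref{INS} gives
\[
\P\dive\bigl(\mu(\rho)\cM(u)\bigr)=\P\bigl(\rho\pa_t u+\rho u\cdot\na u\bigr),\qquad \Q\dive\bigl(\mu(\rho)\cM(u)\bigr)-\na\Pi=\Q\bigl(\rho\pa_t u+\rho u\cdot\na u\bigr),
\]
so that, since $\rho$ is bounded above and below, each of $\|\pa_t u\|_{L^2}$, $\|\P\dive(\mu(\rho)\cM(u))\|_{L^2}$ and $\|\Q\dive(\mu(\rho)\cM(u))-\na\Pi\|_{L^2}$ is dominated by $\|\sqrt\rho\,\pa_t u\|_{L^2}+\|u\|_{L^4}\|\na u\|_{L^4}$. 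It will then be enough to prove the pointwise bounds in \eqref{timedecay1} together with the plain and the weighted time integrability of $\|\sqrt\rho\,\pa_t u\|_{L^2}$ and of $\|u\|_{L^4}\|\na u\|_{L^4}$.

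\smallskip
\noindent\textbf{Energy estimates.} Next I would use two differential identities. Testing the momentum equation with $u$, using $\dive u=0$, the identity $\|\cM(u)\|_{L^2}^2=2\|\na u\|_{L^2}^2$ and $\mu(\rho)\ge\mu_0-\ep_0>0$ from \eqref{smallassume1}, one gets $\frac{d}{dt}\|\sqrt\rho\,u\|_{L^2}^2+c\|\na u\|_{L^2}^2\le0$; hence $\|u(t)\|_{L^2}\lesssim\|u_0\|_{L^2}$, the map $t\mapsto\|\sqrt\rho\,u(t)\|_{L^2}$ is non-increasing, and $\int_0^\infty\|\na u\|_{L^2}^2\,dt\lesssim\|u_0\|_{L^2}^2$; moreover $\|\rho(t)-1\|_{L^q}=\|\rho_0-1\|_{L^q}$ for all $q$, since $\rho$ is transported by a divergence-free field. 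Then I would test the momentum equation with the material derivative $\pa_t u+u\cdot\na u$ (formally, and rigorously on a smooth approximation): after an integration by parts the contributions of $\pa_t\mu(\rho)$ — the only terms that would a priori require $\na^2u\in L^2$, which is not available since $\mu(\rho)\in L^\infty$ only — cancel, leaving
\[
\int\rho\,|\pa_t u+u\cdot\na u|^2\,dx+\frac14\frac{d}{dt}\int\mu(\rho)|\cM(u)|^2\,dx=-\int\mu(\rho)\,\cM(u)_{ij}\,\pa_i u_k\,\pa_k u_j\,dx+\int\Pi\,\pa_i u_k\,\pa_k u_i\,dx,
\]
whose right-hand side is $\lesssim\|\na u\|_{L^2}\bigl(\|\na u\|_{L^4}^2+\|\Pi\|_{L^4}\|\na u\|_{L^4}\bigr)$. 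The control of $\|\na u\|_{L^4}$ and $\|\Pi\|_{L^4}$ in $L^2_t$ is exactly the $\R^2$-version of items (3)--(4) of Theorem \ref{desjthm}, valid once $\ep_0$ is small. Multiplying this identity by appropriate powers $(t+e)^{\theta}$ and integrating — once the $L^2$-decay of $u$ is known — should yield the $\|\na u(t)\|_{L^2}^2$ bound in \eqref{timedecay1} and the estimates \eqref{timedecay2}--\eqref{timedecay3}; since the weight in \eqref{timedecay3} is critical, the $\ep$-loss must be split (one first proves $\|\na u(t)\|_{L^2}^2\lesssim(t+e)^{-1-2\b(p)+\ep/2}$), after which \eqref{timedecay2} follows from \eqref{timedecay3} by Cauchy--Schwarz since $2\b(p)>\ep$.

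\smallskip
\noindent\textbf{Fourier splitting and low frequencies.} For the $L^2$-decay of $u$ I would add and subtract $\mu_0\D u$ to recast the momentum equation as a perturbed heat equation,
\[
\pa_t u-\mu_0\D u=\P\bigl[(1-\rho)\pa_t u-\rho u\cdot\na u+\dive\bigl((\mu(\rho)-\mu_0)\cM(u)\bigr)\bigr]=:\P F,
\]
so that $\widehat u(t,\xi)=e^{-\mu_0|\xi|^2 t}\widehat u_0(\xi)+\int_0^t e^{-\mu_0|\xi|^2(t-s)}\widehat{\P F}(s,\xi)\,ds$. In $\frac{d}{dt}\|\sqrt\rho\,u\|_{L^2}^2+c\|\na u\|_{L^2}^2\le0$ I would split $\R^2$ at $|\xi|=r(t)$ with $r(t)^2=K/(t+e)$; using $\|\na u\|_{L^2}^2\ge r(t)^2\bigl(\|u\|_{L^2}^2-\int_{|\xi|\le r(t)}|\widehat u|^2\,d\xi\bigr)$ and the bounds on $\rho$ gives $\frac{d}{dt}\|\sqrt\rho\,u\|_{L^2}^2+\frac{\alpha}{t+e}\|\sqrt\rho\,u\|_{L^2}^2\le\frac{C}{t+e}\int_{|\xi|\le r(t)}|\widehat u(t,\xi)|^2\,d\xi$, with $\alpha$ as large as one likes by enlarging $K$. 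The low-frequency integral I would estimate from the Duhamel formula: the linear part contributes $\lesssim(t+e)^{-2\b(p)}$ by Hausdorff--Young ($\widehat u_0\in L^{p'}$, $p'=p/(p-1)$) and Hölder on the disc $\{|\xi|\le r(t)\}$; the term $(1-\rho)\pa_t u$ is bounded in $L^1_x$ by $\|\rho_0-1\|_{L^2}\|\pa_t u\|_{L^2}$, integrable in $s$ by \eqref{timedecay2}, hence contributes $\lesssim(t+e)^{-1}$; the terms $\dive(u\otimes u)$ and $\dive((\mu(\rho)-\mu_0)\cM(u))$ each carry a factor $|\xi|$ and are handled via $\|u\otimes u\|_{L^1}=\|u\|_{L^2}^2$ and $\|(\mu(\rho)-\mu_0)\cM(u)\|_{L^1}\lesssim\|\rho_0-1\|_{L^2}\|\na u\|_{L^2}$, which under the bootstrap decay hypotheses integrate in $s$ to powers of $(t+e)$ that, after multiplication by $|\xi|^2$ and integration over $\{|\xi|\le r(t)\}$, are $\lesssim(t+e)^{-4\b(p)}+(t+e)^{-1-2\b(p)+\ep}\lesssim(t+e)^{-2\b(p)}$; the leftover piece $(\rho-1)u\cdot\na u$ is bounded in $L^1_x$ by $\|\rho_0-1\|_{L^2}\|u\|_{L^4}\|\na u\|_{L^4}$ and absorbed similarly (distinguishing the cases $\b(p)\le\tfrac14$ and $\b(p)>\tfrac14$). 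Feeding these bounds into the differential inequality and applying Gronwall's lemma with $\alpha>2\b(p)$ would give $\|u(t)\|_{L^2}^2\lesssim(t+e)^{-2\b(p)}$.

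\smallskip
\noindent\textbf{Bootstrap and the main difficulty.} Since all these estimates are coupled, I would close them by a bootstrap: first establish a weak decay of $\|u(t)\|_{L^2}$ (from the monotonicity of $\|\sqrt\rho\,u(t)\|_{L^2}$, $\int_0^\infty\|\na u\|_{L^2}^2<\infty$, and the crude low-frequency bound), then iterate the argument above up to the sharp exponent $2\b(p)$, and finally, knowing the decay of $\|u\|_{L^2}^2$ and $\|\na u\|_{L^2}^2$, run the weighted second energy estimate to get \eqref{timedecay2}--\eqref{timedecay3}. I expect the main obstacle to be the low regularity of the density: $\rho$, and hence $\mu(\rho)$, is merely bounded, so the Stokes-type operator has no $H^2$ estimate; this is exactly why the error $F$ above keeps the genuinely nonlinear terms $(1-\rho)\pa_t u$ and $\dive((\mu(\rho)-\mu_0)\cM(u))$, and why the cancellation of $\pa_t\mu(\rho)$ in the material-derivative identity, the smallness \eqref{smallassume1}, the transport invariance $\|\rho(t)-1\|_{L^q}=\|\rho_0-1\|_{L^q}$, and the $L^p_x$-regularity of $(\na u,\na\Pi)$ from Theorem \ref{desjthm} must all be used in concert; the critical time weight in \eqref{timedecay3} is the only additional technical point, dealt with by the $\ep$-slack as indicated.
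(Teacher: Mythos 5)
Your overall architecture is the same as the paper's: a Desjardins-type weighted energy estimate obtained by testing the momentum equation against $f(t)u_t$, combined with Schonbek's Fourier splitting on the Duhamel formula $u=e^{\mu_0t\Delta}u_0+\int_0^te^{\mu_0(t-s)\Delta}\P\bigl(\dive((\mu(\rho)-\mu_0)\cM(u))+(1-\rho)u_t-\rho u\cdot\na u\bigr)ds$, closed by a bootstrap. But the proposal glosses over precisely the step where the real work is: seeding the bootstrap. On the first pass one only has $\int_0^\infty\|u_t\|_{L^2}^2dt<\infty$ and $\int_0^\infty\|\na u\|_{L^2}^2dt<\infty$, so Cauchy--Schwarz gives $(\int_0^t\|u_t\|_{L^2}ds)^2+(\int_0^t\|u\|_{L^2}\|\na u\|_{L^2}ds)^2\lesssim 1+t$; with $g^2(t)=\alpha/(t+e)$ the corresponding contribution to the right side of the splitting inequality is $g^2(t)\cdot g^2(t)(1+t)\sim(t+e)^{-1}$, and $\int_0^t(s+e)^{\alpha-1}ds\sim(t+e)^{\alpha}/\alpha$, so after dividing by $(t+e)^{\alpha}$ you get \emph{no decay at all} — enlarging $K$ (hence $\alpha$) does not help, since the same constant multiplies the source. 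This is why the paper runs a multi-stage iteration with \emph{logarithmic} weights $g^2(t)=m/((t+e)\ln(t+e))$, successively obtaining $\|u\|_{L^2}^2\lesssim\ln^{-1}(t+e)$, $\ln^{-2}$, $\ln^{-4}$ (each stage feeding improved bounds on $\int_0^t\|u_t\|ds$ and $\int_0^t\|u\|\|\na u\|ds$ back in, via new choices of $f$ in the energy estimates). Moreover, logarithmic gains never become polynomial by direct iteration: the paper needs a further Gronwall argument on the unit-interval averages $Y(t)=\max_{1\le s\le t}\int_{s-1}^s(\tau+e)^{\alpha}\|u(\tau)\|_{L^2}^2d\tau$ and $I(t)=\int_0^t\ln^{-2}(s+e)\|u\|_{L^2}^2ds$ to jump from $\ln^{-4}$ decay to $(t+e)^{-2\b(p)}$. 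Your sentence ``iterate the argument above up to the sharp exponent'' hides exactly this device; note also that using \eqref{timedecay2} to make the $(1-\rho)u_t$ term integrable inside the proof of \eqref{timedecay1} is circular unless the order is specified — the paper only obtains $\int_0^\infty\|u_t\|_{L^2}dt<\infty$ after the $\ln^{-4}$ stage, via $f(t)=(t+e)\ln^2(t+e)$.

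A second, more localized issue is the pressure term in the energy identity. The term $\int\pa_t\mu(\rho)|\na u|^2dx$ does not cancel; it is rewritten through the transport equation and the momentum equation into $\int(v\cdot\na)u\cdot(\rho u_t+\rho v\cdot\na u+\na\Pi)dx+\int\mu(\rho)\,\mathrm{tr}(\na v\na u\cM(u))dx$, which still must be estimated. Bounding the pressure contribution by $\|\na u\|_{L^2}\|\Pi\|_{L^4}\|\na u\|_{L^4}$ and invoking Theorem \ref{desjthm}(4) is not adequate: that statement is on $\T^2$, holds only for a renormalized pressure, and an $L^2_t(L^4_x)$ bound is not compatible with the time weights $f(t)$ you need. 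The paper instead uses the quantitative interpolation $\|\na u\|_{L^4}\lesssim\|\na u\|_{L^2}^{1/2}\|\P(\rho u_t+\rho v\cdot\na u)\|_{L^2}^{1/2}$ (which is where the smallness \eqref{smallassume1} enters) together with the $BMO$--$\cH^1$ duality, pairing $\Pi-\cR_i\cR_j(\mu(\rho)\cM(u)_{ij})\in BMO$ against the div--curl quantity $\pa_iv_j\pa_ju_i\in\cH^1$; this turns every right-hand term into $\ep\|\sqrt{\rho}u_t\|_{L^2}^2+C_\ep\|\na v\|_{L^2}^2\|\na u\|_{L^2}^2(1+\|v\|_{L^2}^2)$, which can be absorbed and Gronwalled uniformly in the weight $f$. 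Without this reformulation the weighted energy estimate \eqref{ener1}, on which all of \eqref{timedecay1}--\eqref{timedecay3} rests, does not close.
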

\begin{rmk}\label{rmk0.1}
The first estimate of \eqref{timedecay1} coincides with the
$L^2$-norm decay result in \cite{sch, wie} for the weak solutions of
the two-dimensional classical Navier-Stokes system, and also
coincides with the result in \cite{gz} for \eqref{INS}. When
$\mu(\rho)$ be a constant, we can get optimal decay of $\|\na
u\|_{L^2}^2$ with the order $-1-2\b(p),$ see \cite{huang}. Notice
the main ingredients of the proof in \cite{huang, sch, wie} are the
usual energy estimates and the phase space analysis. In our case,
due to the additional difficulties mentioned above, we not only need
to apply phase space analysis, but also need more explicit energy
estimates, see Proposition \ref{energyprop} below. We note also that
the 3D case with constant viscosity was studied in \cite{AGZ1}.
Using energy estimates with weight in time and the Fourier splitting
method of Schonbek \cite{sch} we can generalize this decay in time
estimates to the 3D case with variable viscosity.
\end{rmk}

Motivated by Proposition \ref{energyprop}, we have a more general
result. Indeed, using interpolation argument we obtain a similar
decay rate of the solution, under a weaker assumption on the initial
volocity.

\begin{thm}\label{mainthm2} {\sl For $1<p<2$ and $0<\al<1,$ let $u_0 \in
L^p(\R^2)\cap H^{\al}(\R^2),$ $\rho_0-1\in L^2(\R^2)$ and $\rho_0\in
L^\infty(\R^2)$ with a positive lower bound. We assume that $(\rho,
u, \na p)$ is a given Lions weak solution of \eqref{INS} with
initial data $(\rho_0, u_0).$ Then under the assumption
\eqref{smallassume1}, there exists a constant $C_{\al}$ which
depends on $\|\rho_0-1\|_{L^2},$ $\|u_0\|_{L^p}$ and
$\|u_0\|_{H^{\al}}$ such that there hold \beq\label{timedecay4}
\|u(t)\|_{L^2}^2\leq C_{\al}(t+e)^{-2\b(p)}, \quad \|\na
u(t)\|_{L^2}^2\leq C_{\al}(t+e)^{-1-2\b(p)+\ep},\eeq
\beq\label{timedecay5} \int_0^\infty \|u_t\|_{L^2} +
\|\P\dive\bigl(\mu(\rho)\cM(u)\bigr)\|_{L^2}
+\|\Q\dive\bigl(\mu(\rho)\cM(u)\bigr)-\na\Pi\|_{L^2}\,dt \leq
C_{\al}, \eeq \beq\label{timedecay6} \int_0^\infty
t^{1-r}(t+e)^{r+2\b(p)-\ep}\Bigl(\|u_t\|_{L^2} +
\|\P\dive\bigl(\mu(\rho)\cM(u)\bigr)\|_{L^2}
+\|\Q\dive\bigl(\mu(\rho)\cM(u)\bigr)-\na\Pi\|_{L^2}\Bigr)^2\,dt
\leq C_{\al}, \eeq with any $\ep>0$ and $0<r<\al.$}
\end{thm}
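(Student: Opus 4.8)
\medskip

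\noindent\textbf{Proof plan.}
The plan is to run the scheme of Theorem \ref{mainthm1}, but to absorb the reduced regularity of $u_0$ near $t=0$ into the time weight $t^{1-r}$; the restriction $0<r<\al$ is precisely what makes that absorption possible, and the required interpolation is carried through the two-dimensional Gagliardo--Nirenberg-type inequalities together with the weighted identity of Proposition \ref{energyprop}. Throughout, the smallness \eqref{smallassume1} is used to treat $\dive(\mu(\r)\cM(u))=\mu_0\D u+\dive\bigl((\mu(\r)-\mu_0)\cM(u)\bigr)$ as a perturbation of $\mu_0\D u$. Since $H^\al(\R^2)\hookrightarrow L^2(\R^2)$, the datum satisfies $u_0\in L^p(\R^2)\cap L^2(\R^2)$, so the argument in the first part of the proof of Theorem \ref{mainthm1}, which combines the basic energy inequality with Schonbek's Fourier splitting method and needs only $u_0\in L^p\cap L^2$, already gives the first bound of \eqref{timedecay4}, $\|u(t)\|_{L^2}^2\leq C(t+e)^{-2\b(p)}$, the basic energy bounds $\sup_{t>0}\|u(t)\|_{L^2}+\int_0^\infty\|\na u(t)\|_{L^2}^2\,dt\leq C$, and the companion weighted bound
\beq\label{pf:wen}
\int_0^\infty(t+e)^{2\b(p)-\ep}\|\na u(t)\|_{L^2}^2\,dt\leq C,
\eeq
valid for every $\ep>0$, with $C$ depending only on $\|\r_0-1\|_{L^2}$, $\|u_0\|_{L^p}$ and $\|u_0\|_{L^2}$.

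The new ingredient, which I expect to be the main obstacle, is a short-time smoothing estimate
\beq\label{pf:sm}
\sup_{0<t\leq1}t^{1-\al}\|\na u(t)\|_{L^2}^2+\int_0^1 t^{1-\al}\|\sqrt{\r}\,\pa_t u(t)\|_{L^2}^2\,dt\leq C_\al.
\eeq
One would obtain it by testing the momentum equation against $\pa_t u$, using \eqref{smallassume1} to absorb the perturbative viscosity term, bounding the convection term $\int_{\R^2}\r\,(u\cdot\na u)\cdot\pa_t u\,dx$ by the two-dimensional Ladyzhenskaya inequality $\|f\|_{L^4}^2\lesssim\|f\|_{L^2}\|\na f\|_{L^2}$, then multiplying by $t^{1-\al}$ and integrating in time; the $H^\al$-regularity of $u_0$ makes the boundary contribution at the origin finite, exactly as it does for the heat flow, for which $\|\na e^{t\mu_0\D}u_0\|_{L^2}^2\lesssim t^{\al-1}\|u_0\|_{H^\al}^2$. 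The delicate point is to run these computations for a mere Lions weak solution rather than for a smooth one; this is essentially contained in Proposition \ref{energyprop}, and the smallness \eqref{smallassume1} is used there in an essential way.

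By the Fourier splitting argument used for \eqref{timedecay1} in the proof of Theorem \ref{mainthm1} — now with \eqref{pf:sm} playing the role that $u_0\in H^1$ played there and the remainder of that argument using only $u_0\in L^p\cap L^2$ and the differential inequality obtained by testing against $\pa_t u$ — one gets the unweighted bound $\|\na u(t)\|_{L^2}^2\leq C_\al(t+e)^{-1-2\b(p)+\ep}$ for $t$ bounded away from the origin. Feeding this, the $L^2$-decay and \eqref{pf:sm} into the weighted energy identity of Proposition \ref{energyprop} taken with $w(t)=t^{1-r}(t+e)^{r+2\b(p)-\ep}$ — which behaves like $t^{1-r}$ near $0$ and like $t^{1+2\b(p)-\ep}$ as $t\to\infty$ — and absorbing the convection term (via Ladyzhenskaya and the smallness of $\|u(t)\|_{L^2}\|\na u(t)\|_{L^2}^2$, which holds for $t$ large by the decay and for $t$ small by \eqref{pf:sm}), one arrives at the schematic inequality
\beq\label{pf:df}
\begin{split}
\f{d}{dt}\Bigl(w(t)\!\int_{\R^2}\!\mu(\r)|\cM(u)|^2\,dx\Bigr)&+c\,w(t)\!\int_{\R^2}\!\r\,|\pa_t u|^2\,dx\\
&\leq w'(t)\!\int_{\R^2}\!\mu(\r)|\cM(u)|^2\,dx+C\,w(t)\,\|u\|_{L^2}^2\|\na u\|_{L^2}^4.
\end{split}
\eeq
Integrating on $(0,t)$: the boundary term at $0$ vanishes because $w(t)\|\na u(t)\|_{L^2}^2\lesssim t^{1-r}t^{\al-1}=t^{\al-r}\to0$ (using $r<\al$); the $w'$-term is integrable by \eqref{pf:wen} and, near the origin, because $\int_0^1 t^{\al-1-r}\,dt<\infty$ (again $r<\al$); and the quartic term is integrable by the decay just obtained and, near the origin, because $\int_0^1 t^{2\al-1-r}\,dt<\infty$ (which holds since $r<\al<2\al$). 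This produces $w(t)\|\na u(t)\|_{L^2}^2\leq C_\al$ — the second bound of \eqref{timedecay4} for $t$ bounded below, the small-$t$ behaviour being that of \eqref{pf:sm} — together with $\int_0^\infty w(t)\,\|\sqrt{\r}\,\pa_t u(t)\|_{L^2}^2\,dt\leq C_\al$.

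Finally, rewriting the momentum equation as $\r\,\pa_t u+\r\,u\cdot\na u=\dive(\mu(\r)\cM(u))-\na\Pi$ and applying the Leray projections yields $\P\dive(\mu(\r)\cM(u))=\P(\r\,\pa_t u)+\P(\r\,u\cdot\na u)$ and $\Q\dive(\mu(\r)\cM(u))-\na\Pi=\Q(\r\,\pa_t u)+\Q(\r\,u\cdot\na u)$, so each of the three quantities occurring in \eqref{timedecay5}--\eqref{timedecay6} is bounded in $L^2$ by $\|\sqrt{\r}\,\pa_t u\|_{L^2}+\|\r\,u\cdot\na u\|_{L^2}$, and $\|\r\,u\cdot\na u\|_{L^2}$ is controlled in turn — by Gagliardo--Nirenberg, and by \eqref{smallassume1} for the $\mu_0\D u$ part — by $\|\sqrt{\r}\,\pa_t u\|_{L^2}+\|u\|_{L^2}\|\na u\|_{L^2}^2$ after an absorption legitimate for $t$ large by the decay and near $t=0$ by \eqref{pf:sm}. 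Then \eqref{timedecay5} follows from Cauchy--Schwarz, $\int_0^\infty\|\sqrt{\r}\,\pa_t u\|_{L^2}\,dt\leq\bigl(\int_0^\infty w(t)\,\|\sqrt{\r}\,\pa_t u\|_{L^2}^2\,dt\bigr)^{1/2}\bigl(\int_0^\infty w(t)^{-1}\,dt\bigr)^{1/2}$, the integral of $w^{-1}$ being finite precisely because $r>0$ near the origin and $\ep<2\b(p)$ at infinity, together with $\int_0^\infty\|u\|_{L^2}\|\na u\|_{L^2}^2\,dt\leq\sup_{t>0}\|u(t)\|_{L^2}\int_0^\infty\|\na u\|_{L^2}^2\,dt<\infty$; and \eqref{timedecay6} follows from $\int_0^\infty w(t)\,\|\sqrt{\r}\,\pa_t u\|_{L^2}^2\,dt\leq C_\al$ together with inserting the decay rates of the previous steps — and, near $t=0$, \eqref{pf:sm}, for which only $r<2\al$ is needed — into $\int_0^\infty w(t)\,\|u\|_{L^2}^2\|\na u\|_{L^2}^4\,dt$.
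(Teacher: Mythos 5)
Your overall scheme --- the weighted energy identities of Proposition \ref{energyprop}, a Fourier-splitting bootstrap for the $L^2$ decay, and the Leray projections plus Cauchy--Schwarz with the weight $w(t)=t^{1-r}(t+e)^{r+2\b(p)-\ep}$ for the last two estimates --- is the paper's scheme, and the roles you assign to $r<\al$ (integrability of $t^{\al-1-r}$ near the origin) and to $r>0$, $\ep<2\b(p)$ (integrability of $w^{-1}$) are exactly right. The gap is in the one genuinely new ingredient, your short-time smoothing estimate $\sup_{0<t\le 1}t^{1-\al}\|\na u(t)\|_{L^2}^2+\int_0^1 t^{1-\al}\|\sqrt{\r}\,\pa_t u\|_{L^2}^2\,dt\le C_\al$. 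The derivation you sketch --- test against $\pa_t u$, multiply by $t^{1-\al}$, integrate, and let ``the $H^\al$-regularity of $u_0$ make the boundary contribution at the origin finite'' --- does not close: the weight $t^{1-\al}$ vanishes at $t=0$, so there is no boundary term there to control; the real obstruction is the term $(1-\al)\int_0^t s^{-\al}\int\mu(\r)|\na u|^2\,dx\,ds$ produced by differentiating the weight (the $f'$-term in \eqref{ener1}), which is \emph{not} controlled by the unweighted energy when $\al>0$, and whose finiteness near $s=0$ is essentially the estimate you are trying to prove (even the optimal rate $\|\na u(s)\|_{L^2}^2\sim s^{\al-1}$ makes that integrand of order $s^{-1}$). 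As written the argument is circular.

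The paper avoids this by never inserting the weight $t^{1-\al}$ into the differential inequality: it applies \eqref{ener1} to the linearized system \eqref{LINS} with the two choices $f(t)=t+e$ (constant $f'$, but a nonzero boundary term $f(0)\|\na u_0\|_{L^2}^2$, hence $H^1$ data) and $f(t)=t$ (constant $f'$ and no boundary term, hence only $L^2$ data), and then \emph{interpolates the linear solution map $u_0\mapsto(\na u,u_t)$ in the initial data} to obtain $\sup_t t^{1-\al}(t+e)^{\al}\|\na u\|_{L^2}^2+\int_0^\infty t^{1-\al}(t+e)^\al\|u_t\|_{L^2}^2\,dt\le C\|u_0\|_{H^\al}^2\exp\{C\|u_0\|_{L^2}^4\}$, only afterwards setting $v=u$. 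This interpolation in the data --- the reason Proposition \ref{energyprop} is stated for a linear system with an external transport field $v$ --- is the step missing from your plan. It is also needed, and not merely $u_0\in L^p\cap L^2$, for the \emph{first} inequality of \eqref{timedecay4}: in this inhomogeneous setting the Duhamel/Fourier-splitting argument carries the term $(1-\r)u_t$ and hence requires a bound on $\int_0^t\|u_t\|_{L^2}\,ds$, which the basic energy inequality does not provide. Once that weighted estimate is in hand, the remainder of your plan goes through as in the paper.
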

\begin{rmk}\label{rmk0.2}
We note also that the 3D case with constant viscosity was studied in
\cite{AGZ1}. Using energy estimates with weight in time and the
Fourier splitting method of Schonbek \cite{sch} we can generalize
this decay in time estimates to the 3D case with variable viscosity.
\end{rmk}

In the second part of this paper, we investigate the regularity
propagation of transport equation. We consider the transport
equation: \beq\label{transport} \left\{
\begin{array}
{l} \displaystyle \pa_t\rho + u \na \rho = 0, \qquad (t,x)\in\Bbb{R}^+\times\Bbb{R}^2,\\
\displaystyle \dive u = 0,\\
\displaystyle \rho|_{t=0} =\rho_0.
\end{array}
\right. \eeq In the case of $u\in L^1(Lip),$ for any small positive
regularity, it is well known that \beno
\|\rho(t)\|_{B^{\ep}_{p,r}}\leq
\|\rho_0\|_{B^{\ep}_{p,r}}\exp(\|u\|_{L^1_t(Lip)}). \eeno And if the
regularity index is $0$, follows from \cite{v}, we have \beno
\|\rho(t)\|_{B^{0}_{p,r}}\leq
C\|\rho_0\|_{B^{0}_{p,r}}(1+\|u\|_{L^1_t(Lip)}). \eeno We want to
know how it changes from zero regularity to positive regularity. So
we define a Besov space with logarithms regularity $B^{\eta
\ln}_{\infty,1}$, which is just between zero regularity and positive
regularity, see Definition \ref{def1.2} and Remark \ref{rmk1.1}
below. So we gain a polynomial relation between the velocity and the
density, which is the case between exponential and linear cases.

According these two results, we give an application about global
existence to solutions of \eqref{INS}.
\begin{thm}\label{mainthm3}
{\sl For $1<p<\f43,$ let $u_0\in L^p(\R^2)\cap H^1(\R^2).$ Let
$\rho_0-1\in B^{1+\ep}_{2,1}(\R^2)$ for any $\ep>0,$ and $\rho_0\in
L^\infty(\R^2)$ with positive lower bound. Then there exist positive
constant $\eta>1$ and $C_0,$ $c_0$ such that if
\beq\label{smallassume2}
\|\mu(\rho_0)-\mu_0\|_{B^{(\eta+1)\ln}_{\infty, 1}}
\Bigl(\f{(1+\mu_0)G(\rho_0,u_0)}{\mu_0}\Bigr)^{\eta+1}\exp\big\{(\eta+1)\exp(C_0\|u_0\|_{L^2}^4)\big\}
\leq c_0\mu_0, \eeq where \beq\label{constant1}
\begin{split}
&G(\rho_0,u_0)=G_1\exp(G_2),\\
&G_1=\|\rho_0-1\|_{L^2}+\|\rho_0-1\|_{L^2}^7 + \|u_0\|_{L^p}
+\|u_0\|_{L^p}^7 +\|u_0\|_{H^1} + \|u_0\|_{L^p}^7\|u_0\|_{H^1}^7\\
&\qquad+(1+ \|\rho_0-1\|_{L^2}^7)\|u_0\|_{H^1}^{14},\\
&G_2= \|u_0\|_{L^p}^4+
\|u_0\|_{H^1}^4+\|\rho_0-1\|_{L^2}^4+\|u_0\|_{L^p}^4\|u_0\|_{H^1}^4+
(1+\|\rho_0-1\|_{L^2}^4)\|u_0\|_{H^1}^8,
\end{split}\eeq \eqref{INS} has a global solution $(\rho, u)$ such
that $\rho-1 \in L^\infty((0,T); B^{1+\f{\ep}2}_{2,1}(\R^2))$ for
any $T>0,$ and $\na u \in L^1(\R^+; B^0_{\infty, 2}).$ }
\end{thm}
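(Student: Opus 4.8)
\emph{Proof plan.} The strategy is a continuation (bootstrap) argument coupling the time-decay estimates of Theorem \ref{mainthm1} with the transport-regularity estimates of the second part of the paper: the decay estimates furnish a global-in-time control of $\nabla u$ in a space just strong enough to propagate the regularity of $\rho$, while the logarithmic-Besov propagation keeps $\mu(\rho)$ so close to the constant $\mu_0$ that Theorem \ref{mainthm1} stays applicable. The hypothesis \eqref{smallassume2} is exactly what closes this loop.

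I would begin with a local existence and continuation statement: for data $(\rho_0,u_0)$ as in the theorem there is a unique maximal solution $(\rho,u)$ on $[0,T^*)$ with $\rho-1\in C([0,T];B^{1+\epsilon}_{2,1})$, $u\in C([0,T];H^1)$ and $\nabla u\in L^1_T(B^0_{\infty,2})$ for every $T<T^*$, which is in particular a Lions weak solution, and such that if $T^*<\infty$ then either $\int_0^{T^*}\|\nabla u(t)\|_{B^0_{\infty,2}}\,dt=\infty$ or $\|\mu(\rho(t))-\mu_0\|_{L^\infty}$ exceeds the threshold $\epsilon_0$ of \eqref{smallassume1}. So it suffices to bound these two quantities on $[0,T^*)$ in terms of the data only, with the second staying strictly below $\epsilon_0$. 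For the second, observe that $\mu\circ\rho$ satisfies the same transport equation $\partial_t(\mu(\rho))+u\cdot\nabla(\mu(\rho))=0$ with $\dive u=0$, so $\|\mu(\rho(t))-\mu_0\|_{L^\infty}=\|\mu(\rho_0)-\mu_0\|_{L^\infty}\lesssim\|\mu(\rho_0)-\mu_0\|_{B^{(\eta+1)\ln}_{\infty,1}}\le\epsilon_0$ by \eqref{smallassume2}. Hence \eqref{smallassume1} holds on all of $[0,T^*)$, and since also $1<p<\frac43<2$ with $u_0\in L^p\cap H^1$, Theorem \ref{mainthm1} — together with Proposition \ref{energyprop} and the $H^1$ estimate of Theorem \ref{desjthm} — applies there and yields \eqref{timedecay1}--\eqref{timedecay3} with a constant $C_1\lesssim G(\rho_0,u_0)\exp(C_0\|u_0\|_{L^2}^4)$, the exponential arising from closing the $H^1$ and weighted-energy estimates against the basic $L^2$ energy.

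Next I would convert these decay bounds into $\|\nabla u\|_{L^1(\R^+;B^0_{\infty,2})}\le\Phi(\rho_0,u_0)$ for an explicit $\Phi$. Elliptic regularity for the Stokes operator $u\mapsto\mu_0^{-1}\P\,\dive(\mu(\rho)\cM(u))$ — absorbing the top-order error via $\|\mu(\rho)-\mu_0\|_{L^\infty}\le\epsilon_0$ and controlling $\nabla\mu(\rho)$ from the propagated $B^{1+\epsilon}$ regularity of $\rho$ — turns \eqref{timedecay2}--\eqref{timedecay3} into $L^1_t$ and weighted-$L^2_t$ control of $\|\nabla^2 u\|_{L^2}$; combining this with the decay of $\|u(t)\|_{L^2}$ and $\|\nabla u(t)\|_{L^2}$ from \eqref{timedecay1}, a two-dimensional interpolation bounding the $B^0_{\infty,2}$ norm of $\nabla u$, and a Cauchy--Schwarz in time against a weight $(t+e)^{\pm\theta}$, yields the bound after splitting $[0,T^*)=[0,1]\cup[1,T^*)$; the admissible exponents $\theta$ form a nonempty interval precisely when $1+2\beta(p)>\frac32$, i.e. $p<\frac43$. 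Finally I would feed $\Phi$ into the two transport estimates: the losing estimate in the Besov scale upgrades $\rho_0-1\in B^{1+\epsilon}_{2,1}$ to $\rho-1\in L^\infty((0,T);B^{1+\epsilon/2}_{2,1})$ for every $T$, and the logarithmic-Besov propagation (Definition \ref{def1.2}, Remark \ref{rmk1.1}) gives $\|\mu(\rho(t))-\mu_0\|_{B^{\eta\ln}_{\infty,1}}\lesssim\|\mu(\rho_0)-\mu_0\|_{B^{(\eta+1)\ln}_{\infty,1}}(1+\Phi)^{\eta+1}$, which by \eqref{smallassume2} and $B^{\eta\ln}_{\infty,1}\hookrightarrow L^\infty$ recovers the bound on $\|\mu(\rho)-\mu_0\|_{L^\infty}$ with strict inequality. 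The continuation criterion then forces $T^*=\infty$, and the global bounds just obtained are exactly the asserted regularity.

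The main obstacle is the middle step, the global bound $\|\nabla u\|_{L^1(\R^+;B^0_{\infty,2})}\le\Phi$: the short-time part relies on Desjardins' $H^1$ theory (and is responsible for the $\exp(C_0\|u_0\|_{L^2}^4)$ factor), while the long-time part must use \emph{all} of \eqref{timedecay1}--\eqref{timedecay3} simultaneously together with the Stokes elliptic estimate and a time-weighted Cauchy--Schwarz, and it is here — and only here — that the restriction $p<\frac43$ enters. A secondary, purely bookkeeping difficulty is to track the dependence on the data through every estimate so that the composite constant is dominated by the left-hand side of \eqref{smallassume2}, which is why the elaborate quantity $G(\rho_0,u_0)$ of \eqref{constant1} appears.
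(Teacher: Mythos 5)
Your overall architecture matches the paper's: a bootstrap in which smallness of $\mu(\rho)-\mu_0$ keeps Theorem \ref{mainthm1} applicable, the resulting decay estimates give a global bound on $\|\na u\|_{L^1_t(B^0_{\infty,2})}$, and the transport propagation in the logarithmic Besov space (Proposition \ref{tranprop}) feeds that bound back so that \eqref{smallassume2} closes the loop; you also locate correctly where $p<\f43$ enters (time-integrability in the weighted Cauchy--Schwarz, i.e.\ $2\b(p)>\f12$). The observation that $\|\mu(\rho(t))-\mu_0\|_{L^\infty}$ is conserved by the transport equation is a clean way to verify \eqref{smallassume1}.

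The gap is in the middle step, and it is not merely technical. You propose to reach $\na u\in L^1_t(B^0_{\infty,2})$ via an $H^2$-type Stokes estimate, ``absorbing the top-order error via $\|\mu(\rho)-\mu_0\|_{L^\infty}\le\ep_0$ and controlling $\na\mu(\rho)$ from the propagated $B^{1+\ep}_{2,1}$ regularity of $\rho$.'' Two things break. First, $\na u\in L^1_t(B^0_{\infty,2})$ does not give $\na u\in L^1_t(L^\infty)$, so the $B^{1+\ep}_{2,1}$ norm of $\rho$ cannot be propagated without loss; and even granting $\na\mu(\rho)\in B^{\ep}_{2,1}$, the term $\na\mu(\rho)\cdot\cM(u)$ is not in $L^2$ for small $\ep$ when all you know is $\na u\in L^4$ from \eqref{gradu}, so the second-order elliptic estimate does not close. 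Second, and more importantly, the paper avoids second derivatives of $u$ entirely: it writes $\mu_0(I-S_0)\na u=\na(-\tri)^{-1}\P\dive(I-S_0)\bigl((\mu(\rho)-\mu_0)\cM(u)\bigr)-\na(-\tri)^{-1}\P(I-S_0)(\rho u_t+\rho u\na u)$, bounds the second term in $L^1_t(B^0_{\infty,2})$ directly by the $L^1_t(L^2)$ quantities of \eqref{timedecay2}, and absorbs the first term by the product law of Proposition \ref{productlaw}, $\|(\mu(\rho)-\mu_0)\cM(u)\|_{B^0_{\infty,2}}\le C\|\mu(\rho)-\mu_0\|_{B^{\eta\ln}_{\infty,1}}\|\na u\|_{B^0_{\infty,2}}$. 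This absorption genuinely requires smallness of the $B^{\eta\ln}_{\infty,1}$ norm --- $L^\infty$ is not a multiplier on $B^0_{\infty,2}$ --- which is exactly why the bootstrap quantity \eqref{lifetime} is $\|\mu(\rho)-\mu_0\|_{L^\infty_t(B^{\eta\ln}_{\infty,1})}\le c_1\mu_0$ and why Proposition \ref{tranprop} is designed to propagate precisely this norm with only polynomial growth in $\|\na u\|_{L^1_t(B^0_{\infty,2})}$. In your plan the logarithmic-Besov propagation is invoked only at the end to recover an $L^\infty$ bound; the one place where the log-Besov smallness is indispensable is the place where you do not use it. The remaining low-frequency piece $S_0\na u$ is then handled via $\|\na u\|_{L^1_t(L^4)}$, the interpolation \eqref{gradu} and the decay estimates, which is where $p<\f43$ actually appears in the paper.
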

\begin{rmk}\label{rmk0.3}
We don't mention the result of uniqueness, and this is well known in
\cite{danchin}.
\end{rmk}

\no{\bf The organization of the paper.} In the second section, we
collect some basic facts on  Littlewood-Paley theory and integral
inequalities, which have been used throughout this paper. In Section
3, we shall present the proof of Theorem \ref{mainthm1}. In Section
4, we shall prove Theorem \ref{mainthm2}. In Section 5, we give an
application of Theorem \ref{mainthm1}.

Let us complete this section by the notations we shall use in this
context:

\no{\bf Notation.} For $a\lesssim b$, we mean that there is a
uniform constant $C,$ which may be different on different lines,
such that $a\leq Cb$. $a\approx b$ means that there is two positive
uniform constant $c,$ $C$ such that $cb\leq a\leq Cb.$ We shall
denote by $(c_{j,r})_{j\in\N}$ to be a generic element of
$\ell^r(\N)$ so that $c_{j,r}\geq 0$ and $\sum_{j\in\N}c_{j,r}^r=1.$

\setcounter{equation}{0}
\section{Preliminaries}
First, we are going to recall some facts on the Littlewood-Paley
Theory, one may check \cite{BCD} for details. Let $B
\overset{def}{=} \{ \xi \in \R^2, |\xi| \leq \frac{4}{3} \}$ and
$\mathcal C \overset{def}{=} \{ \xi \in \R^2, \frac{3}{4} \leq |\xi|
\leq \frac{8}{3} \}$. Let $\chi \in \cC_c^{\infty}(B)$ and $\varphi
\in \mathcal C_c^{\infty}(\cC)$ which satisfy \beqo \chi(\xi) +
\sum_{j\geq 0} \varphi(2^{-j}\xi) = 1,\quad \xi \in \R^2, \eeqo
we denote $h \eqdefa \mathcal F^{-1}\varphi$ and $\tilde{h} \eqdefa \mathcal F^{-1}\chi$. Then the Littlewood-Paley operators $\Delta_j$ and $S_j$ can be defined as follows\\
\ben
&&\displaystyle \Delta_jf \eqdefa \varphi(2^{-j}D)f = 2^{2j}\int_{\R^2} h(2^jy)f(x-y)\,dy,\quad \hbox{for}\, j \geq 0,\nonumber\\
&&\displaystyle S_jf \eqdefa \chi(2^{-j}D)f = \sum\limits_{-1\leq k \leq j-1}\Delta_kf = 2^{2j}\int_{\R^2}\tilde{h}(2^jy)f(x-y)\,dy,\\
&&\displaystyle \Delta_{-1}f \eqdefa S_{-1}f, \qquad
S_{-2}f=0.\nonumber \een With the introduction of $\Delta_j$ and
$S_j,$ we define two norm which will be used throughout of our work.
\begin{defi}\label{def1.1}
{\sl Let $s\in \R$ and $1\leq p,r \leq \infty.$ The inhomogeneous
Besov space $B^s_{p,r}$ consists of all tempered distributions $u$
such that
$$\|u\|_{B^s_{p,r}} \eqdefa \|
(2^{js}\|\D_j u\|_{L^p})_{j\geq -1}\|_{\ell^r} <\infty.$$ }
\end{defi}

\begin{defi}\label{def1.2}
{\sl Let $\eta >0.$ The logarithms inhomogeneous Besov space
$B^{\eta \ln}_{\infty,1}$ consists of all tempered distributions $u$
such that
$$\|u\|_{B^{\eta \ln}_{\infty,1}} \eqdefa \sum_{j\geq -1}
(2+j)^{\eta}\|\D_j u\|_{L^\infty} <\infty.$$ }
\end{defi}
\begin{rmk}\label{rmk1.1}
One may see that for any positive $\ep$ and $\eta,$
$B^{\ep}_{\infty, 1}\subset B^{\eta \ln}_{\infty,1} \subset
B^0_{\infty,1}.$
\end{rmk}

Let us recall the following lemmas from \cite{BCD}.

\begin{lem}[Bernstein's inequality]\label{ber}
{\sl Let $1 \leq p \leq q \leq \infty.$ Assume that $f \in L^p,$
then there exists a positive constant C independent of $f,$ $j$ such
that \beno
\hbox{supp} \hat{f} \subset \{|\xi| \lesssim 2^j\} \Rightarrow \|\pa^{\alpha}f\|_{L^q} \leq C2^{j|\alpha|+2j(\frac{1}{p}-\frac{1}{q})}\|f\|_{L^p},\\
\hbox{supp} \hat{f} \subset \{|\xi| \approx 2^j\} \Rightarrow
\|f\|_{L^p} \leq C2^{-j|\alpha|}\|\pa^{\alpha}f\|_{L^p}. \eeno }
\end{lem}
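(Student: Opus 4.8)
The final statement to prove is Bernstein's inequality (Lemma \ref{ber}), which is purely a statement about tempered distributions whose Fourier support is frequency-localized, independent of the Navier–Stokes machinery in the rest of the paper. I will prove the two implications separately using the standard convolution representation afforded by the Littlewood–Paley setup.

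\textbf{First inequality (low-frequency gain and derivative loss).} Suppose $\hbox{supp}\,\hat{f}\subset\{|\xi|\lesssim 2^j\}$. The plan is to introduce a smooth cutoff $\phi\in\cC_c^\infty(\R^2)$ equal to $1$ on a neighborhood of the support of $\hat f$ at frequency $2^j$; concretely take $\phi$ fixed at scale one and set $\phi_j(\xi)=\phi(2^{-j}\xi)$, so that $\hat f=\phi_j\hat f$. Writing $g=\cF^{-1}\phi$ and $g_j(x)=2^{2j}g(2^jx)$, we get the exact identity $\pa^\alpha f = (\pa^\alpha g_j)\star f$. The derivative falls on the kernel, and a change of variables gives $\|\pa^\alpha g_j\|_{L^1}=2^{j|\alpha|}\|\pa^\alpha g\|_{L^1}$ together with the scaling of the $L^1\to L^1$ part. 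Applying Young's convolution inequality in the sharp form $\|k\star f\|_{L^q}\le \|k\|_{L^r}\|f\|_{L^p}$ with $\tfrac1q+1=\tfrac1r+\tfrac1p$ (so $\tfrac1r=1-(\tfrac1p-\tfrac1q)$) yields, after the Jacobian rescaling of $\|\pa^\alpha g_j\|_{L^r}$, the claimed factor $2^{j|\alpha|+2j(\frac1p-\frac1q)}$, with $C=\|\pa^\alpha g\|_{L^r}$ independent of $f$ and $j$. The point is that all $j$-dependence is extracted cleanly through the scaling of a single fixed Schwartz kernel.

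\textbf{Second inequality (reverse Bernstein on an annulus).} Suppose now $\hbox{supp}\,\hat f\subset\{|\xi|\approx 2^j\}$. Here I want to invert the derivative, recovering $f$ from $\pa^\alpha f$. The key is to choose $\Psi\in\cC_c^\infty(\R^2\setminus\{0\})$ equal to $1$ on the annulus $\{|\xi|\approx 2^j\}$ (again $\Psi_j(\xi)=\Psi(2^{-j}\xi)$ from a fixed annular bump), and define a multiplier $m(\xi)=\dfrac{\Psi(\xi)}{(i\xi)^\alpha}$, which is smooth and compactly supported away from the origin precisely because the denominator does not vanish on the annulus. Then $\hat f(\xi)=m(2^{-j}\xi)\,2^{-j|\alpha|}\,(\widehat{\pa^\alpha f})(\xi)$, so that $f = 2^{-j|\alpha|}\,K_j\star \pa^\alpha f$ where $K_j(x)=2^{2j}K(2^jx)$ and $K=\cF^{-1}m\in L^1$. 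Young's inequality with $r=1$ gives $\|f\|_{L^p}\le 2^{-j|\alpha|}\|K_j\|_{L^1}\|\pa^\alpha f\|_{L^p}=C2^{-j|\alpha|}\|\pa^\alpha f\|_{L^p}$ since $\|K_j\|_{L^1}=\|K\|_{L^1}$ is scale-invariant, which is exactly the stated bound with $C=\|K\|_{L^1}$.

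\textbf{Main obstacle.} The genuine content, as always with Bernstein estimates, is not in the convolution-and-Young mechanics but in verifying that the fixed model kernels $\pa^\alpha g$ and $K=\cF^{-1}(\Psi/(i\xi)^\alpha)$ lie in the required Lebesgue spaces uniformly in $j$; this is where the annular (as opposed to merely low-frequency) support hypothesis is essential in the reverse estimate, since it keeps the multiplier away from the singularity of $\xi^{-\alpha}$ at the origin and renders $m$ a Schwartz function. Once these kernels are seen to be Schwartz (hence in every $L^r$), the scaling computations $\|(\pa^\alpha g)_j\|_{L^r}=2^{j|\alpha|+2j(1-1/r)}\|\pa^\alpha g\|_{L^r}$ and $\|K_j\|_{L^1}=\|K\|_{L^1}$ are routine changes of variables, and the constants are manifestly independent of $f$ and $j$. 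I would therefore organize the proof around isolating these two model kernels first and then invoking Young's inequality, which is the cleanest path and avoids any appeal to the specific $\Delta_j$ normalization beyond the existence of the admissible cutoffs $\chi,\varphi$ already fixed in the excerpt.
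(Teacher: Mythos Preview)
Your proof is correct and is exactly the standard argument found in \cite{BCD}: write $f$ as a convolution with a rescaled Schwartz kernel coming from a fixed cutoff, push derivatives onto the kernel, and apply Young's inequality, with the reverse estimate on the annulus handled by the smooth multiplier $\Psi(\xi)/(i\xi)^\alpha$. There is nothing to compare against in the paper itself, since the paper does not prove Lemma~\ref{ber} but simply recalls it from \cite{BCD}; your write-up supplies precisely the proof one finds there.
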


\begin{lem}
{\sl Let $\phi$ be a smooth function supported in the annulus $\{\xi
\in \R^2: |\xi| \approx 1\}.$ Then, there exist two positive
constants $c$ and $C$ depending only on $\phi$ such that for any $1
\leq p \leq \infty$ and $\lambda > 0,$ we have \beqo \|\phi
(\lambda^{-1}D)e^{t\Delta}f\|_{L^p} \leq
Ce^{-ct\lambda^2}\|\phi(\lambda^{-1}D)f\|_{L^p}. \eeqo }
\end{lem}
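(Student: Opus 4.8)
The plan is to read $\phi(\lambda^{-1}D)e^{t\Delta}$ as a Fourier multiplier whose symbol $\phi(\lambda^{-1}\xi)e^{-t|\xi|^2}$ lives in the annulus $\{|\xi|\approx\lambda\}$, where the Gaussian factor is already of size $e^{-ct\lambda^2}$, and then to convert this heuristic into an $L^1$ bound on the associated convolution kernel. First I would fix an auxiliary $\wt\phi\in\cC_c^\infty$ supported in a slightly larger annulus with $\wt\phi\equiv1$ on $\Supp\phi$, so that $\phi(\lambda^{-1}\xi)=\phi(\lambda^{-1}\xi)\wt\phi(\lambda^{-1}\xi)$. Since all the operators in play are Fourier multipliers and hence commute, this yields
\beqo
\phi(\lambda^{-1}D)e^{t\Delta}f=\wt\phi(\lambda^{-1}D)e^{t\Delta}\bigl(\phi(\lambda^{-1}D)f\bigr)=K_t*\bigl(\phi(\lambda^{-1}D)f\bigr),
\eeqo
where $K_t\eqdefa\cF^{-1}\bigl[\wt\phi(\lambda^{-1}\xi)e^{-t|\xi|^2}\bigr]$. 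By Young's convolution inequality, $\|\phi(\lambda^{-1}D)e^{t\Delta}f\|_{L^p}\le\|K_t\|_{L^1}\,\|\phi(\lambda^{-1}D)f\|_{L^p}$ holds for every $1\le p\le\infty$ at once, so the whole statement, uniformity in $p$ included, reduces to proving $\|K_t\|_{L^1}\le Ce^{-ct\lambda^2}$.

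Next I would remove the parameter $\lambda$ by scaling. Substituting $\xi=\lambda\zeta$ shows $K_t(x)=\lambda^2\Psi(\lambda x)$ with $\Psi\eqdefa\cF^{-1}\bigl[\wt\phi(\zeta)e^{-s|\zeta|^2}\bigr]$ and $s\eqdefa t\lambda^2$, and since the $L^1$ norm is dilation invariant, $\|K_t\|_{L^1}=\|\Psi\|_{L^1}$; it thus suffices to show $\|\Psi\|_{L^1}\le Ce^{-cs}$. To pass from the frequency side to an $L^1$ bound I would insert the weight $(1+|y|^2)$ and use Cauchy--Schwarz,
\beqo
\|\Psi\|_{L^1}\le\bigl\|(1+|y|^2)^{-1}\bigr\|_{L^2}\,\bigl\|(1+|y|^2)\Psi\bigr\|_{L^2},
\eeqo
where the first factor is a finite constant in $\R^2$. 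By Plancherel the second factor equals, up to a harmless constant, $\bigl\|(1-\Delta_\zeta)[\wt\phi(\zeta)e^{-s|\zeta|^2}]\bigr\|_{L^2}$. Because $\wt\phi$ is supported where $|\zeta|\approx1$, each $\zeta$-derivative landing on $e^{-s|\zeta|^2}$ costs at most a factor $s|\zeta|$, so the Laplacian produces terms of size at most $(1+s+s^2)$ times $e^{-s|\zeta|^2}$; on the support one has $e^{-s|\zeta|^2}\le e^{-a^2 s}$, where $a>0$ is the inner radius of the annulus. Integrating over the bounded support gives $\bigl\|(1-\Delta_\zeta)[\wt\phi e^{-s|\zeta|^2}]\bigr\|_{L^2}\le C(1+s^2)e^{-a^2 s}$.

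The only remaining point is to absorb the polynomial prefactor into the exponential: choosing any $c<a^2$, the elementary bound $(1+s^2)e^{-a^2 s}=(1+s^2)e^{-(a^2-c)s}e^{-cs}\le Ce^{-cs}$ holds with $C=\sup_{s\ge0}(1+s^2)e^{-(a^2-c)s}<\infty$, whence $\|\Psi\|_{L^1}\le Ce^{-cs}=Ce^{-ct\lambda^2}$, closing the argument. The constants $c,C$ depend only on $\wt\phi$, hence only on $\phi$, and are independent of $\lambda$, $t$ and $p$. The main (and essentially only) technical obstacle is this last calculus estimate: controlling the $\zeta$-derivatives of $e^{-s|\zeta|^2}$ on the annulus and checking that their polynomial growth in $s$ is strictly dominated by the exponential gain $e^{-a^2 s}$. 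It is worth emphasizing that $c$ can be taken uniform in $\lambda$ precisely because, after the rescaling $\xi=\lambda\zeta$, the inner radius $a$ of $\Supp\wt\phi$ no longer depends on $\lambda$.
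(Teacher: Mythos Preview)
Your proof is correct, but there is nothing to compare: the paper does not prove this lemma at all. It is stated as a recalled fact from \cite{BCD} (Bahouri--Chemin--Danchin), with no argument supplied. Your approach---introduce an auxiliary cutoff $\wt\phi$, reduce via Young to an $L^1$ kernel bound, rescale to $\lambda=1$, and control $\|\Psi\|_{L^1}$ through the weighted $L^2$/Plancherel trick---is precisely the standard proof one finds in that reference, so there is no substantive divergence to discuss.
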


In what follows, we will constantly use Bony's decomposition
\begin{equation}\label{bony}
uv = T_u v + T_v u + R(u,v)
\end{equation}
where \beqo T_uv = \sum\limits_{j \geq -1}S_{j-1}u\Delta_jv \quad
\hbox{and} \quad R(u,v) = \sum\limits_{j \geq -1}\Delta_ju
\tilde{\Delta}_jv, \eeqo where $\tilde{\Delta}_j =
\sum_{i=-1}^{1}\Delta_{j+i}.$

Also, we need some calculus inequalities which can be found in
\cite{wie}.
\begin{lem}
{\sl Let $m \in \R^+,$ $0\leq \al <1$ and $\beta > 0.$ Then
\begin{enumerate}
\item
$\int_0^t (s+e)^{-1} \ln (s+e)^{-m}\,ds \leq \frac{1}{m-1} \quad
\hbox{for} \quad m >1,$
\item
there is some $\gamma_m>0$ such that $\int_0^t (s+e)^{-1-\beta}\ln
(s+e)^m \,ds \leq \gamma_m\beta^{-(m+1)},$
\item
there is some $\gamma_{m,\al} > 0$ such that, for all $t>0,$\\
$\int_0^t (s+e)^{-\al}\ln (s+e)^{-m}\,ds \leq
\gamma_{m,\al}(t+e)^{1-\al}\ln(t+e)^{-m}.$
\end{enumerate}}
\end{lem}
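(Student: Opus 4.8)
The plan is to prove each of the three estimates by the same underlying idea: reduce every integral to an elementary comparison against a monotone majorant and then integrate explicitly. The key observation is that on $[0,t]$ the functions $s\mapsto (s+e)^{-a}$ and $s\mapsto \ln(s+e)$ are both monotone (the first decreasing for $a>0$, the second increasing with $\ln(s+e)\geq 1$ since $s+e\geq e$), so all logarithms are bounded below by $1$, and no singularity arises at $s=0$. I would treat the three parts in the order (1), (2), (3), since they increase in subtlety.

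For part (1), I would substitute $w=\ln(s+e)$, so that $dw=(s+e)^{-1}\,ds$ and $w$ runs from $1$ (at $s=0$) upward. The integral becomes $\int_1^{\ln(t+e)} w^{-m}\,dw$, which for $m>1$ is bounded above by $\int_1^\infty w^{-m}\,dw=\f1{m-1}$, giving the stated bound uniformly in $t$. For part (2), the same substitution $w=\ln(s+e)$ is awkward because of the factor $(s+e)^{-\b}$, so instead I would argue directly: split $[0,t]$ at a threshold $s_0$ chosen so that $(s_0+e)^{-\b/2}$ balances the logarithmic growth, i.e. take $\ln(s_0+e)\sim (m+1)/\b$. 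On $[0,s_0]$ the logarithm is bounded by a constant multiple of $\b^{-m}$ and $\int_0^{s_0}(s+e)^{-1-\b}\,ds\leq \b^{-1}$; on $[s_0,t]$ the decay $(s+e)^{-1-\b/2}$ is integrable and the remaining factor $(s+e)^{-\b/2}\ln(s+e)^m$ is uniformly bounded. Collecting the two pieces yields a bound of the form $\gamma_m\b^{-(m+1)}$. Alternatively, and more cleanly, one can substitute $u=\b\ln(s+e)$ to rescale the exponential decay and reduce to a fixed Gamma-type integral $\int_0^\infty u^m e^{-u}\,du=\Gamma(m+1)$, which directly exposes the $\b^{-(m+1)}$ scaling.

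For part (3), since $0\leq\al<1$ the factor $(s+e)^{-\al}$ is no longer integrable at infinity, so the bound must grow like $(t+e)^{1-\al}$; the point is to show that the logarithmic factor $\ln(s+e)^{-m}$ does not spoil this. The natural strategy is to prove that the function $\Phi(t)=(t+e)^{1-\al}\ln(t+e)^{-m}$ is, up to a constant depending on $m,\al$, an antiderivative-like majorant: I would differentiate $\Phi$ and check that $\Phi'(t)=(t+e)^{-\al}\ln(t+e)^{-m}\bigl[(1-\al)-m\ln(t+e)^{-1}\bigr]\geq \f{1-\al}2\,(t+e)^{-\al}\ln(t+e)^{-m}$ for all $t$ large enough, say $t\geq t_1(m,\al)$ where $m\ln(t+e)^{-1}\leq (1-\al)/2$. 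Integrating this differential inequality from $t_1$ to $t$ bounds the tail of the integral by a constant times $\Phi(t)$, while the contribution from $[0,t_1]$ is a finite constant absorbed into $\gamma_{m,\al}$ (here one uses $\ln(s+e)^{-m}\leq 1$ and $(t+e)^{1-\al}\ln(t+e)^{-m}\gtrsim 1$ to absorb the constant piece).

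The main obstacle is the bookkeeping in part (3): one must verify that $\Phi$ is eventually increasing and that its derivative dominates the integrand, which requires the threshold argument separating small and large $s$, and one must be careful that the constant from the initial segment $[0,t_1]$ is genuinely absorbed by $(t+e)^{1-\al}\ln(t+e)^{-m}$, which is bounded below away from zero for $t\geq 0$ precisely because $1-\al>0$. Parts (1) and (2) are routine once the right substitution is identified; part (2)'s $\b^{-(m+1)}$ scaling and part (3)'s handling of the negative power of the logarithm are the only places where care is needed.
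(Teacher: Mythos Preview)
Your proposal is correct. The paper itself does not supply a proof of this lemma at all: it simply records the three inequalities as ``calculus inequalities which can be found in \cite{wie}'' (Wiegner 1987) and moves on. So there is no approach in the paper to compare against; your argument is a self-contained justification of a result the authors chose to quote.

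Each of your three arguments goes through. The substitution $w=\ln(s+e)$ in part~(1) and the rescaling $u=\b\ln(s+e)$ in part~(2) give the claimed bounds exactly, with $\gamma_m=\Gamma(m+1)$ in the second case. In part~(3) your differential-inequality argument is sound: $\Phi(t)=(t+e)^{1-\al}\ln(t+e)^{-m}$ satisfies $\Phi'(t)\geq\tfrac{1-\al}{2}(t+e)^{-\al}\ln(t+e)^{-m}$ once $\ln(t+e)\geq 2m/(1-\al)$, which handles the tail, and on the initial segment $[0,t_1]$ one uses $\ln(s+e)^{-m}\leq 1$ together with the fact that $\Phi$ has a strictly positive minimum (at $\ln(t+e)=m/(1-\al)$, where $\Phi=e^m((1-\al)/m)^m$) to absorb the finite constant into $\gamma_{m,\al}$. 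The only thing worth making explicit when you write it up is that lower bound on $\Phi$, since that is what justifies the absorption step you flagged as the main obstacle.
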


Finally, we need the integral form of Gronwall's inequality, which
is well known.
\begin{prop}
{\sl Let $f,g,h$ be positive functions defined on $\R^+,$ and
$h(t)\in L^1_{loc}.$ If
$$f(t)\leq g(t)+\int_0^th(s)f(s)\,ds,$$
then following estimate holds: \beq\label{gronwall} f(t)\leq
g(t)+\int_0^t h(s)g(s)\exp(\int_s^th(\tau)\,d\tau)\,ds. \eeq }
\end{prop}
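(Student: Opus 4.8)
The plan is to reduce the integral inequality to a linear differential inequality handled by an integrating factor. First I would set $F(t) \eqdefa \int_0^t h(s) f(s)\,ds$, so that the hypothesis reads $f(t) \le g(t) + F(t)$. Since $h \in L^1_{loc}$ and the product $hf$ is locally integrable (implicit in the statement, as the right-hand side is assumed finite), the function $F$ is absolutely continuous on every compact subinterval of $\R^+$ and satisfies $F'(t) = h(t)f(t)$ for almost every $t$. Feeding the hypothesis into this identity and using $h \ge 0$ yields the almost-everywhere differential inequality $F'(t) \le h(t)g(t) + h(t)F(t)$.

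Next I would introduce $H(t) \eqdefa \int_0^t h(\tau)\,d\tau$, which is absolutely continuous and nondecreasing, and consider $\Phi(t) \eqdefa e^{-H(t)} F(t)$. Being a product of absolutely continuous functions on compact intervals, $\Phi$ is itself absolutely continuous, with $\Phi'(t) = e^{-H(t)}\bigl(F'(t) - h(t)F(t)\bigr) \le e^{-H(t)} h(t) g(t)$ a.e., where the inequality uses the previous step. Integrating from $0$ to $t$ and using $F(0)=0$ gives $\Phi(t) \le \int_0^t e^{-H(s)} h(s) g(s)\,ds$, that is, $F(t) \le \int_0^t e^{H(t)-H(s)} h(s)g(s)\,ds = \int_0^t h(s)g(s)\exp(\int_s^t h(\tau)\,d\tau)\,ds$. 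Adding $g(t)$ and recalling $f(t) \le g(t)+F(t)$ produces exactly \eqref{gronwall}.

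The one point requiring care — and the step I would dwell on — is the regularity bookkeeping: since $h$ is only $L^1_{loc}$ and not continuous, the differentiation must be justified through absolute continuity and the Lebesgue form of the fundamental theorem of calculus rather than classical calculus. If one prefers to bypass this, a completely elementary alternative is Picard iteration: inserting the hypothesis into its own right-hand side $n$ times gives $f(t) \le g(t) + \sum_{k=1}^{n} \int_{t \ge s_1 \ge \cdots \ge s_k \ge 0} h(s_1)\cdots h(s_k) g(s_k)\,ds + R_n$; integrating out $s_1,\dots,s_{k-1}$ with $s_k$ fixed produces the kernel $\frac{1}{(k-1)!}(\int_{s_k}^t h)^{k-1}$, so the $k$-th term equals $\int_0^t h(s)g(s)\frac{1}{(k-1)!}(\int_s^t h)^{k-1}\,ds$, and summing the series reproduces the exponential in \eqref{gronwall}. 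The remainder $R_n$ is controlled by $\frac{1}{n!}(\int_0^t h)^n$ times a local bound on $f$, hence vanishes as $n\to\infty$; this route uses only monotone convergence and sidesteps differentiation altogether, which is why the lemma is regarded as well known.
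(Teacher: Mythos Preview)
Your argument is correct: the integrating-factor reduction to $\Phi(t)=e^{-H(t)}F(t)$ is the standard route, and your Picard-iteration alternative is also valid. The paper itself provides no proof of this proposition; it is stated as ``well known'' and left without argument. So there is nothing to compare against --- your write-up simply supplies what the paper omits, and does so cleanly.
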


\setcounter{equation}{0}
\section{The Proof of Theorem \ref{mainthm1}}

In this section, we will prove Theorem \ref{mainthm1}. First, we
have some energy estimates.
\begin{prop}\label{energyprop}
{\sl Let $v\in L^\infty(\R^+;L^2)\cap L^2(\R^+;\dot{H}^1),$ $\dive
v=0.$ Assume that $u_0 \in H^1(\R^2)$ and $\rho_0 \in
L^\infty(\R^2)$ with positive lower bound. $f(t)$ be a positive
second-order differentiable function satisfies $f'(t)\geq 0$ and
$f''(t)\geq 0.$ $(\rho, u)$ be the global weak solution of the
linear system: \beq\label{LINS} \left\{
\begin{array}
{l} \displaystyle \pa_t\rho + v\na \rho = 0, \\
\displaystyle \rho\pa_t u + \rho v\na u - \dive(\mu(\rho)\cM(u)) + \nabla \Pi = 0, \\
\displaystyle \dive u = 0, \\
\displaystyle (\rho, u)|_{t=0}=(\rho_0,u_0).
\end{array}
\right. \eeq Then under the assumption \eqref{smallassume1}, we have
the following estimates: \ben\label{ener1}
&&\sup_{0<t<\infty}f(t)\int_{\R^2}\mu(\rho)|\na u|^2(t)\,dx\notag\\
&&\quad +\int_0^\infty f(t)\int_{\R^2}|\sqrt{\rho}u_t|^2+|\P\dive\bigl(\mu(\rho)\cM(u)\bigr)|^2+ |\Q\dive\bigl(\mu(\rho)\cM(u)\bigr)-\na\Pi|^2\,dxdt \notag\\
&&\leq C(f(0)\|\na u_0\|_{L^2}^2 + \int_0^\infty
f'(t)\int_{\R^2}\mu(\rho)|\na u|^2\,dxdt
)\exp\{C(1+\|v\|_{L^\infty(L^2)}^2)\|\na v\|_{L^2(L^2)}^2\}, \een
\beq\label{ener2} \sup_{0<t<\infty}f'(t)\int_{\R^2}\rho|u|^2(t)\,dx
+ \int_0^\infty f'(t)\int_{\R^2}\mu(\rho)|\na u|^2\,dxdt \leq C(
f'(0)\|u_0\|_{L^2}^2 + \int_0^\infty
f''(t)\int_{\R^2}\rho|u|^2\,dxdt). \eeq }
\end{prop}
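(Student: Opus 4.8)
The plan is to derive two coupled energy identities for the linear system \eqref{LINS} — one "standard" energy balance for $\int\rho|u|^2$, and one "second-level" balance for $\int\mu(\rho)|\na u|^2$ involving $\int|\sqrt\rho\,u_t|^2$ — and then close the estimates by a Gronwall argument using the weight $f(t)$. For the estimate \eqref{ener2}, I would first multiply the momentum equation in \eqref{LINS} by $u$ and integrate over $\R^2$; using $\dive v=0$, the transport equation $\pa_t\rho+v\na\rho=0$, and integration by parts, the convection term drops out and one gets the classical identity $\frac12\frac{d}{dt}\int\rho|u|^2 + \int\mu(\rho)|\na u|^2 = 0$ (up to the factor coming from $\cM(u)=\na u+\na^T u$ and $\dive u=0$). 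Multiplying this identity by $f'(t)$, integrating in time, and integrating by parts in $t$ (moving the derivative off $\int\rho|u|^2$, which produces the $f''(t)$ term since $f''\ge 0$ keeps signs under control) yields \eqref{ener2} directly, with no smallness needed.

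For \eqref{ener1} the key step is to test the momentum equation against $u_t$ (or more precisely against $\pa_t u$, which is legitimate for the regularized/weak solution). This gives
\beno
\int\rho|u_t|^2 + \int\rho (v\na u)\cdot u_t - \int\dive(\mu(\rho)\cM(u))\cdot u_t + \int\na\Pi\cdot u_t = 0.
\eeno
The diffusion term is handled by writing $-\int\dive(\mu(\rho)\cM(u))\cdot u_t = \frac12\frac{d}{dt}\int\mu(\rho)|\na u|^2 - \frac12\int\mu'(\rho)\pa_t\rho\,|\na u|^2$ (again using $\dive u=0$ to symmetrize), and then replacing $\pa_t\rho=-v\na\rho$ and integrating by parts to trade the $\na\rho$ for derivatives falling on $v$ and $\na u$; this is where one uses $v\in L^\infty(L^2)\cap L^2(\dot H^1)$ and the smallness \eqref{smallassume1} to absorb the resulting terms. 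The term $\int\na\Pi\cdot u_t$ must be rewritten using $\dive u_t=0$: one writes $u_t$ in terms of its Helmholtz decomposition and uses that $\Pi$ only pairs with the curl-free part, which is exactly why the quantities $\|\P\dive(\mu(\rho)\cM(u))\|_{L^2}$ and $\|\Q\dive(\mu(\rho)\cM(u))-\na\Pi\|_{L^2}$ appear: from the equation $\rho u_t + \rho v\na u = \dive(\mu(\rho)\cM(u)) - \na\Pi$, projecting gives control of $\P\dive(\mu(\rho)\cM(u))$ and $\Q\dive(\mu(\rho)\cM(u))-\na\Pi$ in terms of $\sqrt\rho u_t$ and the convection term $\rho v\na u$. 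Collecting everything, multiplying by $f(t)$, and integrating by parts in time (the $f'(t)\int\mu(\rho)|\na u|^2$ term on the right-hand side comes precisely from $\frac{d}{dt}$ hitting the weight), one obtains a differential inequality of the form $\frac{d}{dt}\bigl(f(t)\int\mu(\rho)|\na u|^2\bigr) + f(t)(\text{good terms}) \lesssim f'(t)\int\mu(\rho)|\na u|^2 + f(t)\,(1+\|v\|_{L^\infty(L^2)}^2)\|\na v\|_{L^2}^2\int\mu(\rho)|\na u|^2$, and Gronwall's inequality \eqref{gronwall} gives \eqref{ener1}.

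The main obstacle I anticipate is the rigorous justification of testing against $\pa_t u$ and the control of the convection term $\int\rho(v\na u)\cdot u_t$: this term is borderline and requires care — one splits $|\int\rho(v\na u)\cdot u_t|\le \frac14\int\rho|u_t|^2 + C\int\rho|v|^2|\na u|^2$, and then the last integral must be estimated by a Ladyzhenskaya/Gagliardo–Nirenberg inequality in $\R^2$, $\int|v|^2|\na u|^2 \lesssim \|v\|_{L^2}\|\na v\|_{L^2}\|\na u\|_{L^2}\|\na u\|_{\dot H^1}$ or similar, using the upper and lower bounds on $\rho$; the factor $\|\na u\|_{\dot H^1}$ then has to be reabsorbed into the good term $\int|\P\dive(\mu(\rho)\cM(u))|^2$ via elliptic regularity for the Stokes-type operator $\dive(\mu(\rho)\cM(u))$ with the nearly-constant coefficient $\mu(\rho)$ — and it is exactly here that the smallness \eqref{smallassume1} is essential, since the elliptic estimate $\|\na^2 u\|_{L^2}\lesssim \|\dive(\mu(\rho)\cM(u))\|_{L^2} + (\text{perturbation involving }\|\mu(\rho)-\mu_0\|_{L^\infty})$ only gives a reabsorbable perturbation when $\ep_0$ is small. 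Once this elliptic estimate and the reabsorption are in place, the rest is bookkeeping with the weight $f$ and Gronwall.
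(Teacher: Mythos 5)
Your overall architecture is the same as the paper's: \eqref{ener2} is obtained by testing with $f'(t)u$ (your variant of multiplying the unweighted energy identity by $f'$ and integrating by parts in time is equivalent), and \eqref{ener1} by testing with $f(t)u_t$, handling the commutator $\int\pa_t\mu(\rho)|\na u|^2\,dx$ through the transport equation, identifying $\P\dive\bigl(\mu(\rho)\cM(u)\bigr)=\P(\rho u_t+\rho v\na u)$ and $\Q\dive\bigl(\mu(\rho)\cM(u)\bigr)-\na\Pi=\Q(\rho u_t+\rho v\na u)$, and closing with Gronwall. You also correctly locate the main difficulty in the quadratic terms such as $\int\rho|v|^2|\na u|^2\,dx$.

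But the mechanism you propose for closing those terms would fail. You want to bound $\int|v|^2|\na u|^2\,dx$ by a quantity involving $\|\na u\|_{\dot H^1}$ and then reabsorb it via the elliptic estimate $\|\na^2u\|_{L^2}\lesssim\|\dive(\mu(\rho)\cM(u))\|_{L^2}+(\hbox{perturbation involving }\|\mu(\rho)-\mu_0\|_{L^\infty})$. The perturbation here is $\dive\bigl((\mu(\rho)-\mu_0)\cM(u)\bigr)$, which contains $\na\mu(\rho)\cdot\cM(u)$; since $\rho$ is only assumed bounded and is transported by a non-Lipschitz field, $\na\mu(\rho)$ is not controlled in any norm, so the perturbation is not small — it is not even finite — and indeed $u$ does not belong to $H^2$ in this setting (cf.\ Theorem \ref{desjthm}(4), where only $\na u\in L^2_t(L^p)$ for $p$ slightly above $4$ is available). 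The paper (following Desjardins) avoids ever differentiating $\mu(\rho)$: from $-\mu_0\tri u=\dive\bigl((\mu(\rho)-\mu_0)\cM(u)\bigr)-\dive\bigl(\mu(\rho)\cM(u)\bigr)$ one expresses $\mu_0\pa_iu_j$ as zeroth-order Riesz-type operators acting on $(\mu(\rho)-\mu_0)\cM(u)$ and on $\mu(\rho)\cM(u)$, estimates in $L^4$, absorbs the first piece by the smallness \eqref{smallassume1}, and interpolates the second to get \eqref{gradu}: $\|\na u\|_{L^4}\lesssim\|\na u\|_{L^2}^{1/2}\|\P(\rho u_t+\rho v\na u)\|_{L^2}^{1/2}$. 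All quadratic terms, including $\|v\na u\|_{L^2}^2\le\|v\|_{L^4}^2\|\na u\|_{L^4}^2$, are then closed with this $L^4$ bound rather than with $H^2$ regularity. A secondary inaccuracy: $\int\na\Pi\cdot u_t\,dx$ simply vanishes because $\dive u_t=0$; the pressure actually enters through the viscosity commutator as $\int\Pi\,\pa_iv_j\pa_ju_i\,dx$, and controlling it requires the renormalized $BMO$ bound $\|\Pi-\cR_i\cR_j(\mu(\rho)\cM(u)_{ij})\|_{BMO}\lesssim\|\rho u_t+\rho v\na u\|_{L^2}$ paired with the div-curl (Hardy space $\cH^1$) structure of $\pa_iv_j\pa_ju_i$ — a step missing from your outline.
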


\begin{proof} First, we follow the line of the proof of Theorem \ref{desjthm}, see \cite{desj}. By taking $L^2$
inner product of the momentum equation of \eqref{LINS} with
$f(t)u_t$ and using integration by parts, we deduce that \beno
f(t)\int_{\R^2}|\sqrt{\rho}u_t|^2\,dx + f(t)\int_{\R^2} (\rho v \na
u)\cdot u_t\,dx + f(t)\int_{\R^2} \mu(\rho)\na u :\na u_t\,dx=0.
\eeno Note that \beno f(t)\int_{\R^2} \mu(\rho)\na u
:\na u_t\,dx &=& \f12 \pa_t [f(t)\int_{\R^2}\mu(\rho)|\na u|^2\,dx]-\f12f'(t)\int_{\R^2}\mu(\rho)|\na u|^2\,dx\\
&&-\f12f(t)\int_{\R^2} \pa_t\mu(\rho)|\na u|^2\,dx, \eeno and from
the derivation of (29) in \cite{desj} that \beno -\int_{\R^2}
\pa_t\mu(\rho)|\na u|^2\,dx&=& \int_{\R^2}\dive(\mu(\rho)v)|\na
u|^2\,dx\\
&=& \int_{\R^2}(v\na)u\cdot\dive(\mu(\rho)\cM(u))\,dx +\int_{\R^2} \mu(\rho)\mbox{tr}(\na v \na u \cM(u))\,dx \\
&=&\int_{\R^2}(v\na) u\cdot(\rho u_t+\rho v\na u +\na
\Pi)\,dx+\int_{\R^2} \mu(\rho)\mbox{tr}(\na v\na u \cM(u))\,dx,\eeno
we have \beno &&\f{d}{dt} [f(t)\int_{\R^2}\mu(\rho)|\na u|^2\,dx] +
f(t)\int_{\R^2}|\sqrt{\rho}u_t|^2\,dx\\
&\lesssim& f'(t)\int_{\R^2}\mu(\rho)|\na u|^2\,dx +
f(t)\int_{\R^2}|\sqrt{\rho}v\na u|^2\,dx\\
&&+f(t)\int_{\R^2}\mu(\rho)|\na v||\na u|^2\,dx +f(t)
\Big|\int_{\R^2}\Pi\pa_i v_j\pa_j u_i\,dx \Big|.\eeno Recall that
\beqo -\mu_0\tri u = \dive\bigl((\mu(\rho)-\mu_0)\cM(u)\bigr)-
\dive\bigl(\mu(\rho)\cM(u)\bigr), \eeqo so that we have \beqo
\mu_0\pa_i u_j =
\cR_i\P_j\cR\bigl((\mu(\rho)-\mu_0)\cM(u)\bigr)-\cR_i\P_j\cR\bigl(\mu(\rho)\cM(u)\bigr).
\eeqo Estimating it in the $L^4(\R^2)$ and using the
Gagliardo-Nirenberg inequality, we can write \beno \|\na u\|_{L^4}
&\lesssim& \|\P\otimes\Q\bigl((\mu(\rho)-\mu_0)\cM(u)\bigr)\|_{L^4}
+ \|\P\otimes\Q\bigl(\mu(\rho)\cM(u)\bigr)\|_{L^4}\\
&\lesssim& \|\mu(\rho)-\mu_0\|_{L^\infty(\R^+;L^\infty)}\|\na
u\|_{L^4}+
\|\P\otimes\Q\bigl(\mu(\rho)\cM(u)\bigr)\|_{L^2}^{\f12}\|\na\Bigl(\P\otimes\Q\bigl(\mu(\rho)\cM(u)\bigr)\Bigr)\|_{L^2}^{\f12}
\eeno Finally, using \eqref{smallassume1} and the conservation of
the momentum, we obtain that \beq\label{gradu} \|\na
u\|_{L^4}\lesssim \|\na u\|_{L^2}^{\f12}\|\P(\rho u_t +\rho v\na
u)\|_{L^2}^{\f12},\eeq

Now letting $(-\tri)^{-\f12}\cR$ operate on the equation of
momentum, we get that \beqo \Pi= \cR_i\cR_j\bigl( \mu(\rho)(\pa_i
u_j +\pa_j u_i)\bigr) +(-\tri)^{-\f12}\cR(\rho u_t+\rho v\na
u).\eeqo It follows that \beno\|\Pi
-\cR_i\cR_j(\mu(\rho)\cM(u))\|_{BMO}\lesssim\|\na(\Pi
-\cR_i\cR_j(\mu(\rho)\cM(u)))\|_{L^2}\lesssim \|\rho u_t +\rho v\na
u\|_{L^2}. \eeno We obtain that \beno \Big|\int_{\R^2}\Pi\pa_i
v_j\pa_j u_i\,dx \Big|&\leq& \|\na v\|_{L^2}\|\na u\|_{L^4}^2 +
\|\Pi-\cR_i\cR_j(\mu(\rho)\cM(u))\|_{BMO}\|\pa_i v_j\pa_j u_i\|_{\cH^1}\\
&\leq&\|\na v\|_{L^2}\|\na u\|_{L^2}\|\rho u_t +\rho v\na u\|_{L^2},
\eeno so that \beno f(t)\Big|\int_{\R^2}\Pi\pa_i u_j\pa_j u_i\,dx
\Big|&\leq&C_\ep f(t)\|\na v\|_{L^2}^2\|\na u\|_{L^2}^2 +\ep
f(t)(\|\sqrt{\rho}u_t\|_{L^2}^2 + \|v\na u\|_{L^2}^2). \eeno \beno
f(t)\int_{\R^2}\mu(\rho)|\na v||\na u|^2\,dx &\leq& C f(t)\|\na
v\|_{L^2}\|\na u\|_{L^4}^2\\
&\leq&C_\ep f(t)\|\na v\|_{L^2}^2\|\na u\|_{L^2}^2 +\ep
f(t)(\|\sqrt{\rho}u_t\|_{L^2}^2 + \|v\na u\|_{L^2}^2).\eeno Also
\beno  \|v\na u\|_{L^2}^2 \leq \|v\|_{L^4}^2\|\na u\|_{L^4}^2\leq
\|v\|_{L^2}\|\na v\|_{L^2}\|\na u\|_{L^2}\|\rho u_t
+\rho v\na u\|_{L^2},\\
f(t)\int_{\R^2}|\sqrt{\rho}v\na u|^2\,dx\leq C_\ep
f(t)\|v\|_{L^2}^2\|\na v\|_{L^2}^2\|\na u\|_{L^2}^2 + \ep
f(t)\|\sqrt{\rho}u_t\|_{L^2}^2.\eeno Consequently, \beq\label{ener3}
\begin{split}&\f{d}{dt} [f(t)\int_{\R^2}\mu(\rho)|\na u|^2\,dx] +
f(t)\int_{\R^2}|\sqrt{\rho}u_t|^2\,dx\\
&\lesssim f'(t)\int_{\R^2}\mu(\rho)|\na u|^2\,dx + f(t)\|\na
u\|_{L^2}^2\|\na v\|_{L^2}^2(1+\|v\|_{L^2}^2). \end{split}\eeq
Second, we act the Leray projector $\P$ on the momentum equation of
\eqref{LINS} to get that \beno \P\dive\bigl(\mu(\rho)\cM(u)\bigr)&=&
\P(\rho u_t +\rho v\na u),\\
\Q\dive\bigl(\mu(\rho)\cM(u)\bigr)-\na\Pi&=&\Q(\rho u_t +\rho v\na
u). \eeno Along with \eqref{ener3}, we have
\beno &&\f{d}{dt} [f(t)\int_{\R^2}\mu(\rho)|\na u|^2\,dx]\\
&&\quad +f(t)\int_{\R^2}|\sqrt{\rho}u_t|^2+|\P\dive\bigl(\mu(\rho)\cM(u)\bigr)|^2+|\Q\dive\bigl(\mu(\rho)\cM(u)\bigr)-\na\Pi|^2  \,dx\\
&\leq& C(f'(t)\int_{\R^2}\mu(\rho)|\na u|^2\,dx + f(t)\|\na
u\|_{L^2}^2\|\na v\|_{L^2}^2(1+\|v\|_{L^2}^2)).\eeno Note that $v\in
L^\infty(L^2)\cap L^2(\dot{H}^1),$ so that
$$\int_0^\infty(1+\|v\|_{L^2}^2)\|\na v\|_{L^2}^2\,dt\leq (1+\|v\|_{L^\infty(L^2)}^2)\|\na v\|_{L^2(L^2)}^2,$$
and \eqref{ener1} holds.

The same strategy can be held for $f'(t)u,$ we have \beno
\f12\f{d}{dt}[f'(t)\int_{\R^2}|\sqrt{\rho}u|^2\,dx] +
f'(t)\int_{\R^2}\mu(\rho)|\na u|^2\,dx = \f12
f''(t)\int_{\R^2}|\sqrt{\rho}u|^2\,dx, \eeno so that \beqo
\sup_{0<t<\infty}f'(t)\int_{\R^2}\rho|u|^2(t)\,dx + \int_0^\infty
f'(t)\int_{\R^2}\mu(\rho)|\na u|^2\,dxdt \leq C(
f'(0)\|u_0\|_{L^2}^2 + \int_0^\infty
f''(t)\int_{\R^2}\rho|u|^2\,dxdt). \eeqo
\end{proof}

According these two energy estimates, letting $v=u,$ we can prove
Theorem \ref{mainthm1}. More precisely, we have the following
theorem.
\begin{thm}\label{decaythm}
{\sl Under the assumption of Theorem \ref{mainthm1},
\eqref{timedecay1}, \eqref{timedecay2} and \eqref{timedecay3} hold.
More precisely, we have \beq\label{decay1} \|u(t)\|_{L^2}^2\lesssim
(K+K^7)\exp(K^2)(t+e)^{-2\b(p)}, \qquad \|\na u(t)\|_{L^2}^2\lesssim
(K+K^7)\exp(K^2)(t+e)^{-1-2\b(p)+\ep},\eeq \beq\label{decay2}
\int_0^\infty \|u_t\|_{L^2} +
\|\P\dive\bigl(\mu(\rho)\cM(u)\bigr)\|_{L^2}
+\|\Q\dive\bigl(\mu(\rho)\cM(u)\bigr)-\na\Pi\|_{L^2}\,dt \lesssim
\sqrt{K}+K, \eeq \beq\label{decay3} \int_0^\infty
(t+e)^{1+2\b(p)-\ep}\Bigl(\|u_t\|_{L^2} +
\|\P\dive\bigl(\mu(\rho)\cM(u)\bigr)\|_{L^2}
+\|\Q\dive\bigl(\mu(\rho)\cM(u)\bigr)-\na\Pi\|_{L^2}\Bigr)^2\,dt
\lesssim (K+K^7)\exp(K^2), \eeq where \beq\label{constant2} K
=\Bigl(\|u_0\|_{L^p}^2+
\|u_0\|_{H^1}^2+\|\rho_0-1\|_{L^2}^2+\|u_0\|_{L^p}^2\|u_0\|_{H^1}^2+
(1+\|\rho_0-1\|_{L^2}^2)\|u_0\|_{H^1}^4
\Bigr)\exp\{C\|u_0\|_{L^2}^4\}.\eeq}
\end{thm}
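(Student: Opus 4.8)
The plan is to apply Proposition~\ref{energyprop} with $v=u$ and with a carefully chosen weight $f(t)$, and to bootstrap the resulting weighted energy inequalities using the Fourier splitting method of Schonbek, starting from the basic energy identity. First I would record the basic energy balance for the full system \eqref{INS}: since $\dive u=0$ and $\rho$ satisfies the transport equation, one has $\frac{d}{dt}\int_{\R^2}\rho|u|^2\,dx+2\int_{\R^2}\mu(\rho)|\na u|^2\,dx=0$, hence $\|\sqrt\rho u\|_{L^\infty(L^2)}$ and $\sqrt{\mu_0}\|\na u\|_{L^2(L^2)}$ are controlled by the data (using \eqref{smallassume1} and the positive lower bound on $\rho$). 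The key preliminary step — this is where the hypothesis $u_0\in L^p$ with $1<p<2$ enters — is to upgrade the plain $L^2$ decay to the rate $(t+e)^{-2\b(p)}$ with $\b(p)=\frac12(\frac2p-1)$. I would do this by the Fourier splitting argument: estimating $\int_{|\xi|\le g(t)}|\widehat{\sqrt\rho u}|^2\,d\xi$ by $g(t)^{2(2/p-1)}\|\sqrt\rho u\|_{L^{p'}}^2$-type bounds (controlling the low-frequency part of $\sqrt\rho u$ through the momentum conservation and the $L^p$ norm of $u_0$, absorbing the inhomogeneity $\rho-1\in L^2$), choosing $g(t)^2\sim \frac{C}{t+e}$, and solving the resulting differential inequality for $y(t)=\int\rho|u|^2\,dx$. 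This yields the first estimate in \eqref{decay1}.

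Next I would run Proposition~\ref{energyprop} twice with the weight $f(t)=(t+e)^{1+2\b(p)-\ep}$ (so $f'(t),f''(t)\ge0$ and $f',f''$ carry two fewer powers). From \eqref{ener2}, the term $\int_0^\infty f''(t)\int\rho|u|^2\,dxdt\sim\int_0^\infty(t+e)^{-1+2\b(p)-\ep}(t+e)^{-2\b(p)}\,dt=\int_0^\infty(t+e)^{-1-\ep}\,dt<\infty$ converges precisely because of the $L^2$ decay just obtained and the loss $\ep>0$; this bounds $\sup_t f'(t)\|\sqrt\rho u(t)\|_{L^2}^2$ and $\int_0^\infty f'(t)\|\na u\|_{L^2}^2\,dt$. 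Feeding $\int_0^\infty f'(t)\int\mu(\rho)|\na u|^2\,dxdt$ into the right-hand side of \eqref{ener1} then controls $\sup_t f(t)\|\na u(t)\|_{L^2}^2$, which is the second estimate in \eqref{decay1}, as well as $\int_0^\infty f(t)\big(\|\sqrt\rho u_t\|_{L^2}^2+\|\P\dive(\mu(\rho)\cM(u))\|_{L^2}^2+\|\Q\dive(\mu(\rho)\cM(u))-\na\Pi\|_{L^2}^2\big)\,dxdt$, which is \eqref{decay3} after using the lower bound on $\rho$ to replace $\sqrt\rho u_t$ by $u_t$. Estimate \eqref{decay2} follows by taking $f\equiv1$ in \eqref{energyprop} (so the right side is just $\|\na u_0\|_{L^2}^2$ times the exponential factor) together with Cauchy--Schwarz in time against the finite weighted $L^2(L^2)$ bound: $\int_0^\infty\|u_t\|_{L^2}\,dt\le\big(\int_0^\infty(t+e)^{1+2\b(p)-\ep}\|u_t\|_{L^2}^2\,dt\big)^{1/2}\big(\int_0^\infty(t+e)^{-1-2\b(p)+\ep}\,dt\big)^{1/2}$, where the second factor is finite since $1+2\b(p)=\frac2p>1$ and $\ep$ is small.

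Throughout, I would keep careful track of the constants to produce exactly the combination $K$ in \eqref{constant2}: the exponential $\exp\{C\|u_0\|_{L^2}^4\}$ comes from the factor $\exp\{C(1+\|u\|_{L^\infty(L^2)}^2)\|\na u\|_{L^2(L^2)}^2\}$ in \eqref{ener1} after bounding $\|u\|_{L^\infty(L^2)}^2$ and $\mu_0\|\na u\|_{L^2(L^2)}^2$ by $C\|u_0\|_{L^2}^2$ (with the lower bound on $\rho$) — note $\|u_0\|_{L^2}^2\le\|u_0\|_{L^2}\cdot\|u_0\|_{L^2}$ is absorbed into the fourth power — while the polynomial factor $K+K^7$ arises from iterating the quadratic right-hand sides: the $\|\na u\|_{L^2}^2\|\na v\|_{L^2}^2$ coupling, when Gronwall is applied, squares the norm that already contains products like $\|u_0\|_{L^p}^2\|u_0\|_{H^1}^2$, and one more application of \eqref{ener1} (whose input is itself a product) pushes the highest-degree monomial up to seventh order. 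The main obstacle I anticipate is the pressure/convection estimate feeding into \eqref{ener3}: one must control $\|\na u\|_{L^4}$ and the critical term $\int_{\R^2}\Pi\,\pa_i v_j\pa_j u_i\,dx$ — but Proposition~\ref{energyprop} already carries out exactly this via the Riesz-transform representation of $\Pi$, the smallness \eqref{smallassume1}, the Gagliardo--Nirenberg inequality, and the $\cH^1$--$BMO$ duality, so here the only real work left is the bookkeeping of the time weights and the verification that every divergent-looking integral converges thanks to the loss $\ep>0$ and the decay rate $2\b(p)$.
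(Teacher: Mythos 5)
Your overall architecture (Schonbek's Fourier splitting plus the weighted energy estimates of Proposition \ref{energyprop} with $v=u$) is the right one, and your second half --- deducing the $\na u$ decay and \eqref{decay3} from the $L^2$ decay by taking $f(t)=(t+e)^{1+2\b(p)-\ep}$ in \eqref{ener1}--\eqref{ener2}, then Cauchy--Schwarz in time --- is essentially what the paper does at the end of its proof. But the step you dispose of in one sentence, namely ``upgrade the plain $L^2$ decay to the rate $(t+e)^{-2\b(p)}$ by the Fourier splitting argument,'' is where nearly all of the work lies, and as you describe it the argument does not close. The low-frequency part of $\hat u(t,\xi)$ is \emph{not} controlled by $\|u_0\|_{L^p}$ and momentum conservation alone: the Duhamel representation contains the forcing terms $\dive((\mu(\rho)-\mu_0)\cM(u))$, $(1-\rho)u_t$ and $\rho u\na u$, whose contributions to $\int_{|\xi|\lesssim g(t)}|\hat u|^2\,d\xi$ are bounded by $(\int_0^t\|\na u\|_{L^2}ds)^2$, $(\int_0^t\|u_t\|_{L^2}ds)^2$ and $(\int_0^t\|u\|_{L^2}\|\na u\|_{L^2}ds)^2$, all of which a priori grow like $(1+t)$. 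Feeding these into the split energy inequality with $g^2(t)\sim C/(t+e)$ yields nothing better than boundedness. The paper therefore bootstraps: first $g^2(t)=2/((t+e)\ln(t+e))$ gives $\|u\|_{L^2}^2\lesssim\ln^{-1}(t+e)$; Proposition \ref{energyprop} with the logarithmic weights $f(t)=(t+e)\ln(t+e)$, $f'(t)=\ln(t+e)$, etc., then upgrades the time integrals of $\|u_t\|_{L^2}$ and $\|u\|_{L^2}\|\na u\|_{L^2}$; iterating gives $\ln^{-2}$ and then $\ln^{-4}$ decay; and only then can one take $g^2(t)=\al/(t+e)$ with $\al\in(2\b(p),1)$ (note $\al<1$ is forced). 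Even there the inequality is not self-contained, since it involves $\int_0^t(s+e)^{\al-2}\int_0^s\ln^{-2}(\tau+e)\|u(\tau)\|_{L^2}^2\,d\tau\,ds$; the paper handles this by introducing the sliding averages $y(t)=\int_{t-1}^t(s+e)^{\al}\|u\|_{L^2}^2\,ds$ and $Y(t)=\max\{y(s):1\le s\le t\}$ and applying the integral Gronwall inequality \eqref{gronwall}. This iteration is also exactly where the constants $(K+K^7)\exp(K^2)$ come from; your heuristic for them presupposes the very loop you omitted.

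A secondary point: your route to \eqref{decay2}, Cauchy--Schwarz against the $(t+e)^{1+2\b(p)-\ep}$-weighted bound, would give $\int_0^\infty\|u_t\|_{L^2}\,dt\lesssim(K+K^7)^{1/2}\exp(K^2/2)$, not the sharper constant $\sqrt K+K$ asserted in \eqref{decay2}. The paper obtains the latter by pairing $\|u_t\|_{L^2}^2$ with the purely logarithmic weight $(t+e)\ln^2(t+e)$ (whose energy bound costs only $K+K^2$) against the integrable weight $(t+e)^{-1}\ln^{-2}(t+e)$. So even granting the $L^2$ decay, you would need to retain the logarithmic-weight estimates to recover the stated form of \eqref{decay2}.
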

\begin{proof}
We get the standard energy estimate to \eqref{INS} that \beqo
\f{d}{dt}\|\sqrt{\rho}u(t)\|_{L^2}^2 + \|\na u(t)\|_{L^2}^2 \leq 0.
\eeqo Using Schonbek's strategy, we obtain \beq\label{2.1}
\f{d}{dt}\|\sqrt{\rho}u(t)\|_{L^2}^2 +
g^2(t)\|\sqrt{\rho}u(t)\|_{L^2}^2 \leq
Mg^2(t)\int_{S(t)}|\hat{u}(t,\xi)|^2\,\mathrm{d}\xi, \eeq where
$S(t) \overset{def}{=} \{\xi: |\xi|\leq \sqrt{\frac{M}{2}}g(t)\}$
and $g(t)$ satisfying $g(t)\lesssim (1+t)^{-\frac{1}{2}}$. We
rewrite the momentum equation of \eqref{INS} as \beno u(t) =
e^{\mu_0t\tri}u_0 + \int_0^te^{\mu_0(t-s)\tri}\mathbb P \Bigl(
\dive\bigl((\mu(\rho)-\mu_0)\cM(u)\bigr) + (1-\rho)u_t -\rho u\na
u\Bigr)(s)\,ds. \eeno Taking Fourier transform with respect to $x$
variables leads to \beqo |\hat{u}(t,\xi)|\lesssim
e^{-\mu_0t|\xi|^2}|\hat{u_0}(\xi)| +
\int_0^te^{-\mu_0(t-s)|\xi|^2}\bigl[|\xi||\mathcal{F}_x\bigl((\mu(\rho)-\mu_0)\cM(u)\bigr)|
+ |\mathcal{F}_x\bigl((1-\rho)u_t -\rho u\na u\bigr)|\bigr]\,ds,
\eeqo which implies that \ben\label{2.2}
\int_{S(t)}|\hat{u}(t,\xi)|^2\,d\xi &\lesssim& \int_{S(t)}e^{-2\mu_0t|\xi|^2}|\hat{u}_0(\xi)|^2\,d\xi + g^4(t)(\int_0^t\|\mathcal{F}_x\bigl((\mu(\rho)-\mu_0)\cM(u)\bigr)\|_{L^\infty_{\xi}}\,ds)^2 \nonumber \\
&&\qquad + g^2(t)(\int_0^t\|\mathcal{F}_x\bigl((1-\rho)u_t -\rho
u\na u\bigr)\|_{L^\infty_{\xi}}\,ds)^2. \een Note that $u_0 \in L^p$
for $1<p<2,$ one has \beq\label{2.3}
\int_{S(t)}e^{-2\mu_0t|\xi|^2}|\hat{u_0}(\xi)|^2\,d\xi \lesssim
(\int_{S(t)}e^{-2\mu_0qt|\xi|^2}\,d\xi)^{\frac{1}{q}}\|\hat{u_0}(\xi)\|_{L^{p'}}^2
\lesssim \|u_0\|_{L^p}^2(1+t)^{-2\b(p)}, \eeq where
$\frac{1}{q}=\frac{2}{p}-1,$ $\frac{1}{p} + \frac{1}{p'} =1.$

Note that $u\in L^\infty(L^2)\cap L^2(\dot{H}^1)$ and $u_t\in
L^2(L^2),$ we have\beno
(\int_0^t\|\mathcal{F}_x\bigl((\mu(\rho)-\mu_0)\cM(u)\bigr)\|_{L^\infty_{\xi}}\,ds)^2&\leq&
(\int_0^t \|(\mu(\rho)-\mu_0)\cM(u)\|_{L^1} \,ds)^2\\
&\leq&\|\mu(\rho)-\mu_0\|_{L^\infty_t(L^2)}^2(\int_0^t\|\na
u\|_{L^2}\,ds)^2\\
&\leq& C\|\rho_0-1\|_{L^2}^2\|u_0\|_{L^2}^2(1+t), \eeno

\beno (\int_0^t\|\mathcal{F}_x\bigl((1-\rho)u_t
\bigr)\|_{L^\infty_{\xi}}\,ds)^2 \leq
\|1-\rho\|_{L^\infty_t(L^2)}^2(\int_0^t\|u_t\|_{L^2}\,ds)^2\leq
C\|\rho_0-1\|_{L^2}^2\|\na u_0\|_{L^2}^2(1+t), \eeno

\beno (\int_0^t\|\mathcal{F}_x\bigl(\rho u\na
u\bigr)\|_{L^\infty_{\xi}}\,ds)^2 \leq \|\rho
u\|^2_{L^\infty_t(L^2)}(\int_0^t\|\na u\|_{L^2}\,ds)^2\leq
C\|u_0\|_{L^2}^4(1+t).\eeno

Then we deduce from \eqref{2.1} to \eqref{2.3} that \beno
&&\f{d}{dt}\|\sqrt{\rho}u(t)\|_{L^2}^2 +
g^2(t)\|\sqrt{\rho}u(t)\|_{L^2}^2\\
&\lesssim& g^2(t)\|u_0\|_{L^p}^2(1+t)^{-2\b(p)} + g^6(t)\|\rho_0-1\|_{L^2}^2\|u_0\|_{L^2}^2(1+t)\\
&& + g^4(t)(\|\rho_0-1\|_{L^2}^2\|\na u_0\|_{L^2}^2+\|u_0\|_{L^2}^4)(1+t)\\
&\lesssim& g^2(t)\|u_0\|_{L^p}^2(1+t)^{-2\b(p)} +
g^4(t)(\|\rho_0-1\|_{L^2}^2\|u_0\|_{H^1}^2+\|u_0\|_{L^2}^4)(1+t).
\eeno Taking $g^2(t)=\frac{2}{(e+t)\ln(e+t)},$ then
$e^{\int_0^tg^2(s)\,ds}=\ln^2(t+e)$ and \beno
&&\ln^2(t+e)\|u(t)\|_{L^2}^2 \\
&\lesssim& \|u_0\|_{L^2}^2+ \int_0^t [\|u_0\|_{L^p}^2\frac{\ln(s+e)}{(s+e)^{1+2\b(p)}} + (\|\rho_0-1\|_{L^2}^2\|u_0\|_{H^1}^2+\|u_0\|_{L^2}^4)(s+e)^{-1}]\,ds\\
&\lesssim& \Bigl(\|u_0\|_{L^p}^2 +
(1+\|\rho_0-1\|_{L^2}^2)\|u_0\|_{H^1}^2+\|u_0\|_{L^2}^4
\Bigr)\ln(t+e), \eeno which gives \beq\label{2.4} \|u(t)\|_{L^2}^2
\lesssim \Bigl(\|u_0\|_{L^p}^2 +
(1+\|\rho_0-1\|_{L^2}^2)\|u_0\|_{H^1}^2+\|u_0\|_{L^2}^4
\Bigr)\ln^{-1}(t+e). \eeq Now we improve the estimate \eqref{2.4}.

We choose $f(t)=t+e$ in \eqref{ener1}, then we have \beno
\sup_{0<t<\infty}(t+e)\|\na u(t)\|_{L^2}^2 + \int_0^\infty
(t+e)\|u_t\|_{L^2}^2\,dt \leq C\|u_0\|_{H^1}^2
\exp\{C\|u_0\|_{L^2}^4\}, \eeno so that \beno
(\int_0^t\|\mathcal{F}_x\bigl((1-\rho)u_t
\bigr)\|_{L^\infty_{\xi}}\,ds)^2 &\leq&
\|1-\rho\|_{L^\infty_t(L^2)}^2(\int_0^t\|u_t\|_{L^2}\,ds)^2\\
&\leq& C\|\rho_0-1\|_{L^2}^2 \int_0^t (s+e)\|u_t\|_{L^2}^2\,ds
\int_0^t (s+e)^{-1}\,ds \\
&\leq& C\|\rho_0-1\|_{L^2}^2\|u_0\|_{H^1}^2 \exp\{C\|u_0\|_{L^2}^4\}
\ln(t+e). \eeno

\beno &&(\int_0^t\|\mathcal{F}_x\bigl(\rho u\na
u\bigr)\|_{L^\infty_{\xi}}\,ds)^2 \leq (\int_0^t
\|u(s)\|_{L^2}\|\na u(s)\|_{L^2}\,ds)^2 \\
&\leq& C\Bigl(\|u_0\|_{L^p}^2 +
(1+\|\rho_0-1\|_{L^2}^2)\|u_0\|_{H^1}^2 \Bigr)\|u_0\|_{H^1}^2
\exp\{C\|u_0\|_{L^2}^4\}(t+e)\ln^{-1}(t+e).\eeno We plug these
estimate into \eqref{2.2} and take $g^2(t)=\frac{3}{(e+t)\ln(e+t)},$
then $e^{\int_0^tg^2(s)\,ds}=\ln^3(t+e)$ and \beno
\ln^3(t+e)\|u(t)\|_{L^2}^2 &\lesssim& K\int_0^t [\f{\ln^2(s+e)}{(s+e)^{1+2\b(p)}} + \f{\ln^2(s+e)}{(s+e)^2} + (s+e)^{-1}]\,ds\\
&\lesssim& K\ln(t+e), \eeno which implies \beq\label{2.5}
\|u(t)\|_{L^2}^2 \lesssim K\ln^{-2}(t+e). \eeq So that \beqo
\int_0^\infty (t+e)^{-1} \|u\|_{L^2}^2\,dt \lesssim K. \eeqo We
choose $f'(t) = \ln (t+e)$ in \eqref{ener2}, then get \beno
&&\sup_{0<t<\infty} \ln (t+e) \|u(t)\|_{L^2}^2 + \int_0^\infty
\ln(t+e)\|\na u\|_{L^2}^2\,dt \\
&\leq& C\bigl(\|u_0\|_{L^2}^2 +\int_0^\infty (t+e)^{-1} \|u\|_{L^2}^2\,dt \bigr)\\
&\lesssim& K. \eeno Consequently, we take $f(t)=(t+e)\ln(t+e)$ in
\eqref{ener1}, obtain that \beno &&\sup_{0<t<\infty} (t+e)\ln (t+e)
\|\na u(t)\|_{L^2}^2 + \int_0^\infty (t+e)\ln(t+e)\|
u_t\|_{L^2}^2\,dt \\
&\leq& C(\|\na u_0\|_{L^2}^2 + \int_0^\infty
(\ln(t+e)+1)\|\na u\|_{L^2}^2\,dxdt)\exp\{C\|u_0\|_{L^2}^4\}\\
&\lesssim& K, \eeno which implies \beq\label{2.6} \|\na
u(t)\|_{L^2}^2\lesssim K(t+e)^{-1}\ln^{-1}(t+e). \eeq Combining
\eqref{2.5} and \eqref{2.6}, we get the revised estimates, \beno
(\int_0^t \|u\|_{L^2}\|\na u\|_{L^2}\,ds)^2 &\lesssim& K^2(\int_0^t
(s+e)^{-\f12}\ln^{-\f32}(s+e)\,ds)^2\\
&\lesssim& K^2(t+e)\ln^{-3}(t+e),\eeno \beno
(\int_0^t\|1-\rho\|_{L^2} \|u_t\|_{L^2}\,ds)^2 &\lesssim&K (\int_0^t
(s+e)\ln(s+e)
\|u_t\|^2_{L^2}\,ds)(\int_0^t (s+e)^{-1}\ln^{-1}(s+e)\,ds)\\
&\lesssim& K^2\ln\bigl(\ln(t+e)\bigr).\eeno Substituting these two
estimates in \eqref{2.2}, and taking $g^2(t) = \f5{(t+e)\ln(t+e)},$
then $e^{\int_0^tg^2(s)\,ds}=\ln^5(t+e)$ and \beno
\ln^5(t+e)\|u(t)\|_{L^2}^2 &\lesssim& \|u_0\|_{L^2}^2+ \int_0^t [\|u_0\|_{L^p}^2\f{\ln^4(s+e)}{(s+e)^{1+2\b(p)}} + K^2\f{\ln^3(s+e) \ln\bigl(\ln(t+e)\bigr)}{(s+e)^2} + \f{K^2}{s+e}]\,ds\\
&\lesssim& (K+K^2)\ln(t+e), \eeno from which, we obtain
\beq\label{2.7}\|u(t)\|_{L^2}^2\lesssim (K+K^2)\ln^{-4}(t+e).\eeq We
choose $f'(t)=\ln^2(t+e) $ in \eqref{ener2}, then\beno
&&\sup_{0<t<\infty} \ln^2 (t+e) \|u(t)\|_{L^2}^2 + \int_0^\infty
\ln^2(t+e)\|\na u\|_{L^2}^2\,dt\\ &\leq& C(\|u_0\|_{L^2}^2 +
\int_0^\infty
(t+e)^{-1}\ln(t+e)\|u(t)\|_{L^2}^2\,dt)\\
&\leq& C(\|u_0\|_{L^2}^2 + (K+K^2) \int_0^\infty
(t+e)^{-1}\ln^{-3}(t+e)\,dt)\\
&\lesssim& K+K^2.\eeno Finally, we take $f(t)=(t+e)\ln^2(t+e)$ in
\eqref{ener1} to get that \beno &&\sup_{0<t<\infty} (t+e)\ln^2 (t+e)
\|\na u(t)\|_{L^2}^2 + \int_0^\infty (t+e)\ln^2(t+e)\|
u_t\|_{L^2}^2\,dt \\
&\leq& C\Bigl(\|\na u_0\|_{L^2}^2 + \int_0^\infty
\bigl(\ln(t+e)+\ln^2(t+e)\bigr)\|\na u\|_{L^2}^2\,dt
\Bigr)\exp\{C\|u_0\|_{L^2}^4\}\\
&\lesssim&K+K^2.\eeno Consequently, we obtain \beno (\int_0^\infty
\|u_t\|_{L^2}\,dt)^2 \leq (\int_0^\infty
(t+e)\ln^2(t+e)\|u_t\|_{L^2}^2\,dt)(\int_0^\infty
(t+e)^{-1}\ln^{-2}(t+e)\,dt)\lesssim K+K^2. \eeno Which is the same
for $\P\dive\bigl(\mu(\rho)\cM(u)\bigr),
\Q\dive\bigl(\mu(\rho)\cM(u)\bigr)-\na\Pi \in L^1(\R^+; L^2),$ and
gives \eqref{decay2}.

Moreover \beno \int_0^\infty \|u\na u\|_{L^2}\,dt \leq \int_0^\infty
\bigl(\|u\|_{L^2}\|\na u\|_{L^2}^2+\|u_t\|_{L^2}\bigr)\,dt\lesssim
\sqrt{K}+K,\eeno and \beno (\int_0^t \|u\|_{L^2}\|\na
u\|_{L^2}\,ds)^2 &\leq& (\int_0^t
\ln^{-2}(s+e)\|u\|_{L^2}^2\,ds)(\int_0^t \ln^2(s+e)\|\na
u\|_{L^2}^2\,ds)\\&\lesssim& (K+K^2)\int_0^t
\ln^{-2}(s+e)\|u\|_{L^2}^2\,ds. \eeno Substituting these estimates
into \eqref{2.2}, noting that $2\b(p) \in (0,1),$ and taking
$g^2(t)=\frac{\al}{t+e}$ with any positive $\al\in(2\b(p),1),$ then
we get \beno
(t+e)^{\al}\|u(t)\|_{L^2}^2 &\lesssim& \|u_0\|_{L^2}^2+ (K+K^2)\int_0^t(s+e)^{\al-2}\int_0^s\ln^{-2}(\tau+e)\|u(\tau)\|_{L^2}^2\,d\tau ds \\
&+&K\int_0^t(s+e)^{\al-1-2\b(p)}\,ds + (K^2+K^3)\int_0^t(s+e)^{\al-2}\,ds \\
&\lesssim&(K+K^3)(t+e)^{\al-2\b(p)}+(K+K^2)
\int_0^t(s+e)^{\al-2}\int_0^s\ln^{-2}(\tau+e)\|u(\tau)\|_{L^2}^2\,d\tau
ds. \eeno For $t \geq 1,$ we define \beno
y(t) &\eqdefa&\int_{t-1}^t(s+e)^{\al}\|u(s)\|_{L^2}^2\,ds, \quad  \quad Y(t)\eqdefa\max\{y(s): 1\leq s \leq t\},\\
I(t) &\eqdefa&\int_0^t\ln^{-2}(s+e)\|u(s)\|_{L^2}^2\,ds. \eeno Then
recall that $\al<1,$ one has \ben\label{2.8}
I(t) &=& \int_0^{t-[t]}\ln^{-2}(s+e)\|u(s)\|_{L^2}^2\,ds + \int_{t-[t]}^t\ln^{-2}(s+e)\|u(s)\|_{L^2}^2\,ds \nonumber\\
&\lesssim& K + \sum_{j=0}^{[t]-1}\int_{t-j-1}^{t-j}\|u(s)\|_{L^2}^2(s+e)^{\al}(s+e)^{-\al}\ln^{-2}(s+e)\,ds \nonumber\\
&\lesssim& K + Y(t)\sum_{j=0}^{[t]-1}(t-j)^{-\al}\ln^{-2}(t-j)
\lesssim K + Y(t)(t+e)^{1-\al}\ln^{-2}(t+e), \een from which, we
infer that \beqo y(t)\lesssim (K+K^3)(t+e)^{\al-2\b(p)} +(K+K^2)
\int_0^t(s+e)^{-1}\ln^{-2}(s+e)Y(s)\,ds. \eeqo Then, applying
Gronwall's inequality \eqref{gronwall}, we have
\beq\label{2.9}\begin{split} Y(t)&\lesssim
(K+K^3)(t+e)^{\al-2\b(p)}\\ &\quad+
(K^2+K^5)\int_0^t(s+e)^{\al-2\b(p)-1}\ln^{-2}(s+e)\exp\{(K+K^2)\int_s^t(\tau+e)^{-1}\ln^{-2}(\tau+e)\,d\tau\}\,ds\\
&\lesssim (K+K^3)(t+e)^{\al-2\b(p)} +
(K^2+K^5)\exp(K^2)\int_0^t(s+e)^{\al-2\b(p)-1}\ln^{-2}(s+e)\,ds\\
&\lesssim (K+K^5)\exp(K^2)(t+e)^{\al-2\b(p)}.\end{split} \eeq
Plunging \eqref{2.9} into \eqref{2.8} gives rise to $I(t) \lesssim
(K+K^5)\exp(K^2)(t+e)^{1-2\b(p)}\ln^{-2}(t+e),$ we obtain \beno
(t+e)^{\al}\|u(t)\|_{L^2}^2 &\lesssim& (K+K^3)(t+e)^{\al-2\b(p)}+ (K^2+K^7)\exp(K^2)\int_0^t(s+e)^{\al-2\b(p)-1}\ln^{-2}(s+e)\,ds\\
&\lesssim& (K+K^7)\exp(K^2)(t+e)^{\al-2\b(p)}, \eeno which gives the
first inequality of \eqref{decay1}.

Go back to \eqref{ener2}, we choose $f''(t)$ such that
$\int_0^\infty f''(t)\|u(t)\|_{L^2}^2\,dt$ is finite. For example,
we let $f''(t)= (t+e)^{-1+2\b(p)-\ep}$ for any $\ep >0,$ (or
$f''(t)= (t+e)^{-1+2\b(p)}\ln^{-\al}(t+e)$ for any $\al>1,$) then
$f(t)= (t+e)^{1+2\b(p)-\ep}.$ Finally, we get \eqref{decay3} and the
second inequality of \eqref{decay1}.
\end{proof}

\setcounter{equation}{0}
\section{The Proof of Theorem \ref{mainthm2}}

The proof of Theorem \ref{mainthm2} is very similar to Theorem
\ref{mainthm1}. We should estimate every term in terms of
$\|u_0\|_{H^{\al}}$ instead of $\|u_0\|_{H^{1}}.$ First, we choose
$f(t)= t+e$ and $t$ in \eqref{ener1}, get that \beno
\sup_{0<t<\infty}(t+e)\|\na u(t)\|_{L^2}^2 + \int_0^\infty
(t+e)\|u_t\|_{L^2}^2\,dt \leq C\|u_0\|_{H^1}^2
\exp\{C(1+\|v\|_{L^\infty(L^2)}^2)\|\na v\|_{L^2(L^2)}^2\}, \eeno
and \beno \sup_{0<t<\infty}t\|\na u(t)\|_{L^2}^2 + \int_0^\infty
t\|u_t\|_{L^2}^2\,dt \leq C\|u_0\|_{L^2}^2
\exp\{C(1+\|v\|_{L^\infty(L^2)}^2)\|\na v\|_{L^2(L^2)}^2\}. \eeno By
interpolation, and let $v=u,$ we get that \beq\label{3.1}
\sup_{0<t<\infty}(t+e)^{\al}t^{1-\al}\|\na u(t)\|_{L^2}^2 +
\int_0^\infty (t+e)^{\al}t^{1-\al}\|u_t\|_{L^2}^2\,dt \leq
C\|u_0\|_{H^{\al}}^2 \exp\{C\|u_0\|_{L^2}^4\}. \eeq So that \beno
(\int_0^t\|\mathcal{F}_x\bigl((1-\rho)u_t
\bigr)\|_{L^\infty_{\xi}}\,ds)^2 &\leq&
\|1-\rho\|_{L^\infty_t(L^2)}^2(\int_0^t\|u_t\|_{L^2}\,ds)^2\\
&\leq& C\|\rho_0-1\|_{L^2}^2 \int_0^t
s^{1-\al}(s+e)^{\al}\|u_t\|_{L^2}^2\,ds
\int_0^t s^{\al-1}(s+e)^{-\al}\,ds \\
&\leq& C_{\al}t^{\al}, \eeno \beno
(\int_0^t\|\mathcal{F}_x\bigl((\mu(\rho)-\mu_0)\cM(u)\bigr)\|_{L^\infty_{\xi}}\,ds)^2&\leq&
(\int_0^t \|(\mu(\rho)-\mu_0)\cM(u)\|_{L^1} \,ds)^2\\
&\leq&\|\mu(\rho)-\mu_0\|_{L^\infty_t(L^2)}^2(\int_0^t\|\na
u\|_{L^2}\,ds)^2\\
&\leq& C\|\rho_0-1\|_{L^2}^2\|u_0\|_{L^2}^2(1+t), \eeno \beno
(\int_0^t\|\mathcal{F}_x\bigl(\rho u\na
u\bigr)\|_{L^\infty_{\xi}}\,ds)^2 \leq \|\rho
u\|^2_{L^\infty_t(L^2)}(\int_0^t\|\na u\|_{L^2}\,ds)^2\leq
C\|u_0\|_{L^2}^4(1+t).\eeno From which, we can deduce that \beno
&&\f{d}{dt}\|\sqrt{\rho}u(t)\|_{L^2}^2 +
g^2(t)\|\sqrt{\rho}u(t)\|_{L^2}^2\\
&\leq& C_{\al}\Bigl(g^2(t)(1+t)^{-2\b(p)} + g^6(t)(1+t)+ g^4(t)(1+t) + g^4(t) t^{\al}\Bigr)\\
&\leq& C_{\al}\Bigl(g^2(t)(1+t)^{-2\b(p)} + g^4(t)(1+t)+ g^4(t)
t^{\al}\Bigr). \eeno Taking $g^2(t)=\frac{2}{(e+t)\ln(e+t)},$ then
$e^{\int_0^tg^2(s)\,ds}=\ln^2(t+e)$ and \beno
&&\ln^2(t+e)\|u(t)\|_{L^2}^2 \\
&\leq& C\|u_0\|_{L^2}^2+ C_{\al}\int_0^t [\frac{\ln(s+e)}{(s+e)^{1+2\b(p)}} + \f1{s+e}+\f1{(s+e)^{2-\al}}]\,ds\\
&\leq& C_{\al}\ln(t+e), \eeno which gives \beq\label{3.2}
\|u(t)\|_{L^2}^2 \leq C_{\al}\ln^{-1}(t+e). \eeq Now, for $t>1,$ we
have \beno (\int_0^t\|\mathcal{F}_x\bigl(\rho u\na
u\bigr)\|_{L^\infty_{\xi}}\,ds)^2 &\leq& C (\int_0^t\|u\|_{L^2}\|\na u\|_{L^2}\,ds)^2\\
&\leq& C_{\al}(\int_0^t
s^{\f{\al-1}2}(s+e)^{-\f{\al}2}\ln^{-\f12}(s+e) \,ds)^2\\
&\leq&C_{\al}\bigl(1+(t+e)\ln^{-1}(t+e)\bigr)\leq
C_\al(t+e)\ln^{-1}(t+e).\eeno We take
$g^2(t)=\frac{3}{(e+t)\ln(e+t)},$ then
$e^{\int_0^tg^2(s)\,ds}=\ln^3(t+e)$ and \beno
\ln^3(t+e)\|u(t)\|_{L^2}^2 &\leq&C\|u_0\|_{L^2}^2+ C_{\al} \int_0^t [\f{\ln^2(s+e)}{(s+e)^{1+2\b(p)}} + \f{(s^{\al}+1)\ln(s+e)}{(s+e)^2} + \f1{s+e}]\,ds\\
&\leq& C_{\al}\ln(t+e), \eeno which implies \beqo \|u(t)\|_{L^2}^2
\leq C_{\al}\ln^{-2}(t+e), \qquad \hbox{for}~t>1.\eeqo And for
$0<t<1,$ it is obvious, so that \beq\label{3.3}\|u(t)\|_{L^2}^2 \leq
C_{\al}\ln^{-2}(t+e),\eeq and \beno \int_0^\infty (t+e)^{-1}
\|u\|_{L^2}^2\,dt \leq C_\al\int_0^\infty (t+e)^{-1}
\ln^{-2}(t+e)\,dt\leq C_\al. \eeno We choose $f'(t) = \ln (t+e)$ in
\eqref{ener2}, then get \beno &&\sup_{0<t<\infty} \ln (t+e)
\|u(t)\|_{L^2}^2 + \int_0^\infty
\ln(t+e)\|\na u\|_{L^2}^2\,dt \\
&\leq& C\bigl(\|u_0\|_{L^2}^2 +\int_0^\infty (t+e)^{-1} \|u\|_{L^2}^2\,dt \bigr)\\
&\leq& C_\al. \eeno Consequently, for any $0<r<\al,$ we take
$f(t)=t^{1-r}(t+e)^r\ln(t+e)$ in \eqref{ener1}, obtain that \beno
&&\sup_{0<t<\infty} t^{1-r}(t+e)^r\ln (t+e) \|\na u(t)\|_{L^2}^2 +
\int_0^\infty t^{1-r}(t+e)^r\ln(t+e)\|
u_t\|_{L^2}^2\,dt \\
&\leq& C \int_0^\infty
\bigl[(\f{t}{t+e})^{1-r}+\ln(t+e)\bigl((\f{t}{t+e})^{1-r}+
(\f{t+e}{t})^{r}\bigr)\bigr]\|\na
u\|_{L^2}^2\,dt\exp\{C\|u_0\|_{L^2}^4\}. \eeno Using \eqref{3.1}, we
get that \beqo \int_0^1 t^{-r}\|\na u(t)\|_{L^2}^2\,dt\leq C_\al
\int_0^1 t^{\al-r-1}(t+e)^{-\al}\,dt \leq C_\al, \eeqo which implies
\beq\label{3.4} \|\na u(t)\|_{L^2}^2\leq C_\al
t^{r-1}(t+e)^{-r}\ln^{-1}(t+e). \eeq Combining \eqref{3.3} and
\eqref{3.4}, for any $t>1,$ we get the revised estimates, \beno
&&(\int_0^t
\|u\|_{L^2}\|\na u\|_{L^2}\,ds)^2 \\
&\leq& C_\al(\int_0^1 s^{\f{r-1}2}
(s+e)^{-\f{r}2}\ln^{-\f12}(s+e)\,ds)^2+C_\al(\int_1^t s^{\f{r-1}2}
(s+e)^{-\f{r}2}\ln^{-\f32}(s+e)\,ds)^2\\
&\leq& C_\al\bigl(1+(t+e)\ln^{-3}(t+e)\bigr)\leq C_\al
(t+e)\ln^{-3}(t+e),\eeno \beno &&(\int_0^t\|1-\rho\|_{L^2}
\|u_t\|_{L^2}\,ds)^2 \\&\leq& C_\al(\int_0^t s^{1-r}(s+e)^r\ln(s+e)
\|u_t\|^2_{L^2}\,ds)(\int_0^t s^{r-1}(s+e)^{-r}\ln^{-1}(s+e)\,ds)\\
&\leq&C_\al \ln\bigl(\ln(t+e)\bigr).\eeno For $t>1,$ taking $g^2(t)
= \f5{(t+e)\ln(t+e)},$ then $e^{\int_0^tg^2(s)\,ds}=\ln^5(t+e)$ and
\beno
&&\ln^5(t+e)\|u(t)\|_{L^2}^2 \\
&\leq& C\|u_0\|_{L^2}^2+ C_\al\int_0^t [\f{\ln^4(s+e)}{(s+e)^{1+2\b(p)}} + \f{\ln^3(s+e) \ln\bigl(\ln(t+e)\bigr)}{(s+e)^2} + \f{1}{s+e}]\,ds\\
&\leq& C_\al\ln(t+e), \eeno from which, we obtain
\beq\label{3.5}\|u(t)\|_{L^2}^2\leq C_\al\ln^{-4}(t+e).\eeq We
choose $f'(t)=\ln^2(t+e) $ in \eqref{ener2}, then\beno
&&\sup_{0<t<\infty} \ln^2 (t+e) \|u(t)\|_{L^2}^2 + \int_0^\infty
\ln^2(t+e)\|\na u\|_{L^2}^2\,dt\\ &\leq& C(\|u_0\|_{L^2}^2 +
\int_0^\infty
(t+e)^{-1}\ln(t+e)\|u(t)\|_{L^2}^2\,dt)\\
&\leq& C_\al(1 +\int_0^1(t+e)^{-1}\,dt  +\int_1^\infty
(t+e)^{-1}\ln^{-3}(t+e)\,dt)\\
&\leq& C_\al.\eeno Finally, we take $f(t)=t^{1-r}(t+e)^r\ln^2(t+e)$
in \eqref{ener1} to get that \beno &&\sup_{0<t<\infty}
t^{1-r}(t+e)^r\ln^2 (t+e) \|\na u(t)\|_{L^2}^2 + \int_0^\infty
t^{1-r}(t+e)^r\ln^2(t+e)\|
u_t\|_{L^2}^2\,dt \\
&\leq& C \int_0^\infty
\bigl\{\ln(t+e)(\f{t}{t+e})^{1-r}+\ln^2(t+e)[(\f{t}{t+e})^{1-r}+(\f{t+e}{t})^{r}]\bigr\}\|\na
u\|_{L^2}^2\,dt
\exp\{C\|u_0\|_{L^2}^4\}\\
&\leq& C_\al.\eeno Consequently, we obtain \beno &&(\int_0^\infty
\|u_t\|_{L^2}\,dt)^2 \\
&\leq& (\int_0^\infty
t^{1-r}(t+e)^r\ln^2(t+e)\|u_t\|_{L^2}^2\,dt)(\int_0^\infty t^{r-1}
(t+e)^{-r}\ln^{-2}(t+e)\,dt)\\
&\leq& C_\al. \eeno Which is the same for
$\P\dive\bigl(\mu(\rho)\cM(u)\bigr),
\Q\dive\bigl(\mu(\rho)\cM(u)\bigr)-\na\Pi \in L^1(\R^+; L^2),$ and
gives \eqref{timedecay5}.

Then follow the same line to the proof of Theorem \ref{mainthm1}, we
get the first inequality of \eqref{timedecay4}. We choose
$f'(t)=(t+e)^{2\b-\ep}$ in \eqref{ener2}, obtain that \beno
&&\sup_{0<t<\infty} (t+e)^{2\b-\ep} \|u(t)\|_{L^2}^2 + \int_0^\infty
(t+e)^{2\b-\ep}\|\na u\|_{L^2}^2\,dt \\
&\leq& C\bigl(\|u_0\|_{L^2}^2 +\int_0^\infty (t+e)^{-1+2\b-\ep} \|u\|_{L^2}^2\,dt \bigr)\\
&\leq& C_\al. \eeno Then taking $f(t)=t^{1-r}(t+e)^{r+2\b-\ep}$ in
\eqref{ener1}, we deduce that \beno &&\sup_{0<t<\infty}
t^{1-r}(t+e)^{r+2\b-\ep} \|\na u(t)\|_{L^2}^2 + \int_0^\infty
t^{1-r}(t+e)^{r+2\b-\ep}\|
u_t\|_{L^2}^2\,dt \\
&\leq& C \int_0^\infty(t+e)^{2\b-\ep}
\bigl[(\f{t}{t+e})^{1-r}+(\f{t+e}{t})^{r}]\|\na u\|_{L^2}^2\,dt
\exp\{C\|u_0\|_{L^2}^4\}\\
&\leq& C_\al,\eeno which implies \eqref{timedecay6} and the second
inequality of \eqref{timedecay4}. This completes the proof of
Theorem \ref{mainthm2}.

\setcounter{equation}{0}
\section{Application: Global existence of \eqref{INS}}
First we present a polynomial relation between the velocity and the
density, which is the case between exponential and linear cases. In
general, we consider the case of non-Lipschitz velocity, the loss of
regularity will occur. With the non-Lipschitz velocity and
logarithms regular density, we have the following proposition.
\begin{prop}\label{tranprop}
{\sl For $\eta >0,$ let $\rho_0\in B^{(\eta+1) \ln}_{\infty,1}$ and
$\na u \in L^1(\R^+; B^0_{\infty,2}).$ Then we have $\rho\in
L^\infty((0,\infty); B^{\eta \ln}_{\infty,1}),$ and
\beq\label{rhoest} \|\rho(t)\|_{B^{\eta \ln}_{\infty,1}} \leq C
\|\rho_0\|_{B^{(\eta+1) \ln}_{\infty,1}} (\int_0^t\|\na
u\|_{B^0_{\infty, 2}}\,d\tau)^{\eta+1}, \eeq for any $t>0.$}
\end{prop}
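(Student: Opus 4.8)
The statement asserts a polynomial (rather than the usual exponential) control of the logarithmic Besov norm of the solution $\rho$ of the transport equation \eqref{transport} by the driving velocity, provided one starts with a slightly better initial regularity. The natural strategy is a Littlewood-Paley / commutator analysis of the transport equation, keeping careful track of the logarithmic weights $(2+j)^\eta$ in Definition \ref{def1.2}, and summing cleverly so that the worst interactions contribute one extra power of $\int_0^t\|\nabla u\|_{B^0_{\infty,2}}\,d\tau$ each time one loses a power of $j$. First I would localize: apply $\Delta_j$ to $\pa_t\rho + u\cdot\nabla\rho = 0$, getting $\pa_t\Delta_j\rho + S_{j-1}u\cdot\nabla\Delta_j\rho = -[\Delta_j, u\cdot\nabla]\rho$, so that along the flow of $S_{j-1}u$ (which is divergence free, so $L^\infty$ is preserved) one obtains
\beno
\|\Delta_j\rho(t)\|_{L^\infty}\le \|\Delta_j\rho_0\|_{L^\infty}+\int_0^t\|[\Delta_j,u\cdot\nabla]\rho(\tau)\|_{L^\infty}\,d\tau.
\eeno
The commutator is estimated by the standard Bony-decomposition bound (as in \cite{BCD}): $\|[\Delta_j,u\cdot\nabla]\rho\|_{L^\infty}\lesssim \sum_{|j'-j|\le N_0} c_{j'}\|\nabla u\|_{B^0_{\infty,2}}\,?$ — more precisely one gets a term of the form $\|\nabla u\|_{L^\infty}$ times a shifted block of $\rho$, plus low-high terms; the point is that this produces, after multiplication by $(2+j)^\eta$ and summation in $j$, a bound of the type $(2+j)$-weighted norm of $\rho$ times $\|\nabla u\|_{B^0_{\infty,2}}$. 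I would set $\Phi(t)\eqdefa \int_0^t\|\nabla u(\tau)\|_{B^0_{\infty,2}}\,d\tau$ and aim to show, by an inductive/bootstrap argument on the power of the logarithm, that
\beno
\|\rho(t)\|_{B^{\eta\ln}_{\infty,1}}\lesssim \sum_{k=0}^{\eta+1}\binom{\eta+1}{k}\|\rho_0\|_{B^{(\eta+1)\ln}_{\infty,1}}\,\Phi(t)^{k},
\eeno
whose dominant term is $\Phi(t)^{\eta+1}$ once $\Phi(t)\gtrsim 1$; for $\Phi(t)\lesssim 1$ the estimate \eqref{rhoest} is trivial from $B^{(\eta+1)\ln}_{\infty,1}\subset B^{\eta\ln}_{\infty,1}$.

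**Key steps in order.** (1) Localize the transport equation and integrate along the regularized flow, yielding the integral inequality above for each $\|\Delta_j\rho(t)\|_{L^\infty}$. (2) Bound the commutator $[\Delta_j,u\cdot\nabla]\rho$ in $L^\infty$ using Bony's decomposition \eqref{bony} and Bernstein's inequality (Lemma \ref{ber}); the crucial feature is to exhibit the estimate with $\|\nabla u\|_{B^0_{\infty,2}}$ (an $\ell^2$-in-$j$ quantity) and an appropriately shifted piece of $\rho$, so that a discrete convolution / Cauchy–Schwarz in $j$ is available. (3) Multiply by $(2+j)^\eta$, sum over $j\ge-1$, and observe that the logarithmic weight shift $(2+j)^\eta\le C(2+j')^\eta$ for $|j-j'|\le N_0$ costs nothing; what remains is
\beno
\|\rho(t)\|_{B^{\eta\ln}_{\infty,1}}\le \|\rho_0\|_{B^{\eta\ln}_{\infty,1}}+C\int_0^t\|\nabla u(\tau)\|_{B^0_{\infty,2}}\,\|\rho(\tau)\|_{B^{(\eta)\ln}_{\infty,1}}\,d\tau + (\text{lower-order}),
\eeno
which by Gronwall gives only the exponential bound $\|\rho_0\|_{B^{\eta\ln}_{\infty,1}}\exp(C\Phi(t))$. (4) The improvement to a polynomial: redo step (3) but instead of closing at level $\eta$, note that the commutator estimate actually gives $\|\rho(t)\|_{B^{\eta\ln}_{\infty,1}}$ controlled by $\|\rho_0\|_{B^{(\eta+1)\ln}_{\infty,1}}$ plus $C\Phi(t)\cdot\sup_{[0,t]}\|\rho\|_{B^{\eta\ln}_{\infty,1}}$ — but crucially the loss in step (2), when summed against $(2+j)^{\eta}$ rather than $(2+j)^{\eta+1}$, saves one logarithmic power per iteration, because the commutator is really bounded in $B^{(\eta-1)\ln}$-type norms relative to $\|\rho\|_{B^{\eta\ln}}$. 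Iterating this $\eta+1$ (or $\lceil\eta\rceil+1$) times, each iteration trading one unit of logarithmic regularity of $\rho_0$ for one factor of $\Phi(t)$, terminates at $B^0_{\infty,1}$ where no further growth occurs, leaving the clean bound \eqref{rhoest}.

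**Main obstacle.** The delicate point is precisely the bookkeeping in step (4): making rigorous the claim that the commutator $[\Delta_j,u\cdot\nabla]\rho$, when one has only $\nabla u\in B^0_{\infty,2}$ (not $L^\infty$!), can be estimated so that summing against the logarithmic weight $(2+j)^{\eta}$ produces $\|\rho\|_{B^{\eta\ln}}\cdot\|\nabla u\|_{B^0_{\infty,2}}$ with the $\ell^2$-in-$j$ structure of $\nabla u$ exactly matching a second $\ell^2$ factor coming from square-summing the remainder term $\tilde\Delta_j$ of $\rho$ — this is where the index $B^0_{\infty,2}$ (second index $2$) in the hypothesis is used, via Cauchy–Schwarz, and it is what prevents the appearance of $\|\nabla u\|_{L^\infty}$ which is not available. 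Getting the shift of logarithmic weights through the Bony paraproduct terms ($T$, $\bar T$, and $R$ separately) and verifying that each of the three behaves consistently with the claimed loss of one $\log$-power per $\Phi$-power is the technical heart; once that single commutator estimate is in hand, the induction and the final summation of the binomial series are routine.
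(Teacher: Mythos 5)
Your plan follows the ``localize, commute, Gronwall'' template, which is genuinely different from what the paper does, and the difference matters: the step where you convert the exponential Gronwall bound into the polynomial bound \eqref{rhoest} is asserted rather than proved, and it is precisely the content of the proposition. In step (3) you correctly observe that the direct commutator approach closes only to $\|\rho_0\|_{B^{\eta\ln}_{\infty,1}}\exp(C\Phi(t))$; in step (4) you claim an iteration that ``trades one unit of logarithmic regularity for one factor of $\Phi(t)$'' and ``terminates at $B^0_{\infty,1}$ where no further growth occurs.'' Neither claim is substantiated. With only $\na u\in B^0_{\infty,2}$ the paraproduct piece of the commutator involves $\|S_{j-1}\na u\|_{L^\infty}\lesssim (2+j)^{1/2}\|\na u\|_{B^0_{\infty,2}}$, so each Gronwall step costs (at least) half a logarithm \emph{and} still produces an exponential $\exp(C\Phi)$, not a linear factor of $\Phi$; moreover the zero-regularity endpoint is not growth-free (even for Lipschitz velocity, Vishik's estimate gives linear growth in $\|u\|_{L^1_t(Lip)}$, and here the velocity is not Lipschitz). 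Finally, $\eta$ is an arbitrary positive real, so ``iterating $\eta+1$ times'' and the binomial ansatz are not well defined.

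The paper's mechanism is entirely different and is the missing idea. One decomposes the \emph{initial datum} rather than the equation: $\rho=\sum_j\rho_j$ where $\rho_j$ solves the transport equation with data $\D_j\rho_0$. Known loss-of-regularity estimates for transport with $\na u\in L^1(B^0_{\infty,2})$ (applied once with positive indices $1/2\to 1/4$ and once with negative indices $-1/2\to-3/4$) show that $\|\D_q\rho_j(t)\|_{L^\infty}$ decays geometrically once $q$ leaves a window $\f{q}2-N\le j\le\f{3q}2+N$, at the price of a single factor $F(u)=\exp\{C\Phi(t)\}$. The off-window tails contribute $C2^{-N/2}N^{\eta}\|\rho_0\|_{B^{\eta\ln}_{\infty,1}}F(u)$; the in-window part is handled by the maximum principle $\|\rho_j(t)\|_{L^\infty}\le\|\D_j\rho_0\|_{L^\infty}$ together with counting the $O(N+j)$ admissible $q$'s against the weight $(2+q)^{\eta}$, which is where the exponent $\eta+1$ and the hypothesis $\rho_0\in B^{(\eta+1)\ln}_{\infty,1}$ enter. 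Choosing $N\sim\Phi(t)$ so that $2^{-N/2}F(u)\sim 1$ kills the exponential and leaves $N^{\eta+1}\sim\Phi(t)^{\eta+1}$. Without this optimization over a frequency cutoff (or some equivalent device), your outline does not reach the stated polynomial bound.
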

\begin{proof}
First, we observe the continuity equation as follow: $\rho =
\sum_{j\geq -1} \rho_j,$ where $\rho_j$ satisfies
\beq\label{transport1} \left\{
\begin{array}
{l} \displaystyle \pa_t\rho_j + u\cdot\na \rho_j = 0, \\
\displaystyle \rho_j|_{t=0} = \D_j \rho_0.
\end{array}
\right. \eeq Then we have \beqo
\|\rho_j(t)\|_{B^{\f14}_{\infty,\infty}}\leq C
\|\rho_j(0)\|_{B^{\f12}_{\infty,\infty}} \exp\{ C\int_0^t\|\na
u\|_{B^0_{\infty 2}}\,d\tau \},\eeqo so that \beqo \|\D_q
\rho_j(t)\|_{L^\infty}\leq C
2^{-\f12(\f{q}2-j)}\|\D_j\rho_0\|_{L^\infty}F(u), \eeqo where
$F(u)=\exp\{ C\int_0^t\|\na u\|_{B^0_{\infty 2}}\,d\tau \}.$ If
$\f{q}2-j>N,$ for a positive inter number $N$ will be fixed later,
we obtain that \beno \sum_q \sum_{j<\f{q}2-N} \|\D_q
\rho_j(t)\|_{L^\infty} (2+q)^{\eta} &\leq& C F(u)\sum_q (2+q)^{\eta}
\sum_{j<\f{q}2-N} 2^{-\f12(\f{q}2-j)}\|\D_j\rho_0\|_{L^\infty}\\
&=& C F(u) \sum_j 2^{\f12 j}\|\D_j\rho_0\|_{L^\infty}
\sum_{q>2(N+j)}(2+q)^\eta 2^{-\f{q}4}\\
&\leq& C F(u)\sum_j
2^{\f12 j}\|\D_j\rho_0\|_{L^\infty}2^{-\f12(N+j)}(2+2(N+j))^\eta\\
&\leq& C F(u) 2^{-\f{N}2}\sum_j \|\D_j\rho_0\|_{L^\infty}(1+N+j)^\eta\\
&\leq& C 2^{-\f{N}2}N^\eta \|\rho_0\|_{B^{\eta \ln}_{\infty,1}}
F(u). \eeno On the other hand, we have \beqo
\|\rho_j(t)\|_{B^{-\f34}_{\infty,\infty}}\leq C
\|\rho_j(0)\|_{B^{-\f12}_{\infty,\infty}} F(u),\eeqo which implies
that \beqo \|\D_q \rho_j(t)\|_{L^\infty}\leq C
2^{-\f12(j-\f{3q}2)}\|\D_j\rho_0\|_{L^\infty}F(u). \eeqo If
$j-\f{3q}2>N,$ then we obtain \beno \sum_q \sum_{j>\f{3q}2+N} \|\D_q
\rho_j(t)\|_{L^\infty} (2+q)^{\eta} &\leq&  C F(u) \sum_j
2^{-\f12j}\|\D_j\rho_0\|_{L^\infty}
\sum_{q<\f23 (j-N)}(2+q)^\eta 2^{\f{3q}4}\\
&\leq& C F(u)\sum_j
2^{-\f12j}\|\D_j\rho_0\|_{L^\infty}2^{\f12(j-N)}(2+\f23(j-N))^\eta\\
&\leq& C 2^{-\f{N}2}N^\eta \|\rho_0\|_{B^{\eta \ln}_{\infty,1}}
F(u). \eeno Note that $\|\D_q \rho_j(t)\|_{L^\infty}\leq
\|\rho_j(t)\|_{L^\infty} \leq \|\D_j \rho_0\|_{L^\infty},$ for
$\f{q}2-N \leq j\leq \f{3q}2+N,$ we get that \beno \sum_q
\sum_{\f{q}2-N \leq j\leq \f{3q}2+N } \|\D_q \rho_j(t)\|_{L^\infty}
(2+q)^{\eta} &\leq& \sum_j
\|\D_j\rho_0\|_{L^\infty}\sum_{\f23(j-N)\leq q \leq
2(j+N)}(2+q)^\eta\\
&\leq&  C\sum_j \|\D_j\rho_0\|_{L^\infty} \f{(2+2(j+N))^{\eta+1}-(2+\f23(j-N))^{\eta+1}}{\eta +1}\\
&\leq& C N^{\eta+1} \|\rho_0\|_{B^{(\eta+1) \ln}_{\infty,1}}. \eeno
Finally, we obtain that \beno \|\rho(t)\|_{B^{\eta \ln}_{\infty,1}}
&\leq& \sum_q \sum_j (2+q)^{\eta}
\|\D_q \rho_j(t)\|_{L^\infty}\\
&\leq& CN^{\eta}2^{-\f{N}2}\|\rho_0\|_{B^{\eta \ln}_{\infty,1}} F(u)
+ C
N^{\eta+1}\|\rho_0\|_{B^{(\eta+1) \ln}_{\infty,1}}\\
&\leq& CN^{\eta+1}\|\rho_0\|_{B^{(\eta+1) \ln}_{\infty,1}} (1+
2^{-\f{N}2}F(u)), \eeno where we use $\|\rho_0\|_{B^{\eta
\ln}_{\infty,1}} \leq \|\rho_0\|_{B^{(\eta+1) \ln}_{\infty,1}}.$ We
choose $2^{-\f{N}2}F(u)\thicksim 1,$ i.e. $N\thicksim \int_0^t\|\na
u\|_{B^0_{\infty 2}}\,d\tau,$ then \beqo \|\rho(t)\|_{B^{\eta
\ln}_{\infty,1}} \leq C \|\rho_0\|_{B^{(\eta+1) \ln}_{\infty,1}}
(\int_0^t\|\na u\|_{B^0_{\infty 2}}\,d\tau)^{\eta+1}. \eeqo
\end{proof}

Now, we present the product law with logarithms Besov space and the
usual Besov space.
\begin{prop}\label{productlaw}
{\sl Let $\eta >1,$ and $a\in B^{\eta\ln}_{\infty,1},$ $b\in
B^0_{\infty,2}.$ Then $ab\in B^0_{\infty,2},$ and
\beq\label{product} \|ab\|_{B^0_{\infty,2}}\leq
C\|a\|_{B^{\eta\ln}_{\infty,1}}\|b\|_{B^0_{\infty,2}}. \eeq}
\end{prop}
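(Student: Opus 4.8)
The plan is to use Bony's decomposition \eqref{bony} to write $ab = T_a b + T_b a + R(a,b)$ and estimate each of the three pieces in $B^0_{\infty,2}$. Recall that membership in $B^0_{\infty,2}$ amounts to $\bigl(\|\D_j(ab)\|_{L^\infty}\bigr)_{j\ge -1}\in\ell^2$, so throughout I will produce, for each piece, a bound of the form $\|\D_j(\cdot)\|_{L^\infty}\le d_j\|a\|_{B^{\eta\ln}_{\infty,1}}\|b\|_{B^0_{\infty,2}}$ with $(d_j)\in\ell^2$ of controlled norm.

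First I would handle the easy paraproduct $T_b a = \sum_k S_{k-1}b\,\D_k a$. Since $\D_j(S_{k-1}b\,\D_k a)$ vanishes unless $k\sim j$, spectral localization gives $\|\D_j(T_b a)\|_{L^\infty}\lesssim \sum_{|k-j|\le N_0}\|S_{k-1}b\|_{L^\infty}\|\D_k a\|_{L^\infty}$. Here $\|S_{k-1}b\|_{L^\infty}\le \sum_{m<k-1}\|\D_m b\|_{L^\infty}\lesssim \|b\|_{B^0_{\infty,2}}\,(2+k)^{1/2}$ by Cauchy--Schwarz in $\ell^2$, and $\|\D_k a\|_{L^\infty}\le (2+k)^{-\eta}\|a\|_{B^{\eta\ln}_{\infty,1}}$ with $\sum_k (2+k)^{-\eta}<\infty$ since $\eta>1$; in fact one gets $\|\D_j(T_b a)\|_{L^\infty}\lesssim (2+j)^{1/2-\eta}\|a\|_{B^{\eta\ln}_{\infty,1}}\|b\|_{B^0_{\infty,2}}$, and $1/2-\eta<-1/2$ so this sequence is in $\ell^2$. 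Next, for $T_a b = \sum_k S_{k-1}a\,\D_k b$, again only $k\sim j$ contributes to $\D_j$, and $\|S_{k-1}a\|_{L^\infty}\le \sum_{m<k-1}\|\D_m a\|_{L^\infty}\le \|a\|_{B^{\eta\ln}_{\infty,1}}$ uniformly in $k$ (this is where $B^{\eta\ln}_{\infty,1}\subset B^0_{\infty,1}$ from Remark~\ref{rmk1.1} is used), so $\|\D_j(T_a b)\|_{L^\infty}\lesssim \|a\|_{B^{\eta\ln}_{\infty,1}}\sum_{|k-j|\le N_0}\|\D_k b\|_{L^\infty}$, whose $\ell^2_j$ norm is controlled by $\|b\|_{B^0_{\infty,2}}$ up to the fixed finite overlap constant.

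The remainder $R(a,b)=\sum_k \D_k a\,\widetilde{\D}_k b$ is the only genuinely delicate term, since the spectrum of $\D_k a\,\widetilde\D_k b$ sits in a ball of radius $\sim 2^k$, so $\D_j R(a,b)$ collects \emph{all} $k\gtrsim j$. I would estimate $\|\D_j R(a,b)\|_{L^\infty}\lesssim \sum_{k\ge j-N_1}\|\D_k a\|_{L^\infty}\|\widetilde\D_k b\|_{L^\infty}\lesssim \|a\|_{B^{\eta\ln}_{\infty,1}}\sum_{k\ge j-N_1}(2+k)^{-\eta}\|\widetilde\D_k b\|_{L^\infty}$. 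Writing this as a convolution of the sequence $(2+k)^{-\eta}\mathbf 1_{k\ge -N_1}$ with $(\|\widetilde\D_k b\|_{L^\infty})_k$, Young's inequality $\ell^1 * \ell^2 \hookrightarrow \ell^2$ gives an $\ell^2_j$ bound provided $(2+k)^{-\eta}\in\ell^1$, i.e. $\eta>1$ — exactly the hypothesis — and yields $\lesssim \|a\|_{B^{\eta\ln}_{\infty,1}}\|b\|_{B^0_{\infty,2}}$.

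The main obstacle is precisely this remainder term: it is the point where the logarithmic gain $(2+k)^{-\eta}$ in $a$ must be summable in $\ell^1$ to close, which is why the statement requires $\eta>1$ rather than merely $\eta>0$; the two paraproducts, by contrast, only need the crude embedding into $B^0_{\infty,1}$. Summing the three $\ell^2_j$ bounds gives \eqref{product}.
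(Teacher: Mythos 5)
Your decomposition and your treatment of the two paraproducts coincide with the paper's proof: for $T_a b$ you use $\|S_{q-1}a\|_{L^\infty}\le\|a\|_{B^{0}_{\infty,1}}\le\|a\|_{B^{\eta\ln}_{\infty,1}}$ plus the finite overlap in $q$, and for $T_b a$ you arrive at the bound $(2+j)^{\frac12-\eta}$, square-summable precisely because $\eta>1$ --- exactly the paper's computation. The one step that does not hold as written is the remainder. The quantity to be controlled in $\ell^2_j$ is
\begin{equation*}
S_j=\sum_{k\ge j-N_1}(2+k)^{-\eta}\|\widetilde\Delta_k b\|_{L^\infty},
\end{equation*}
and this is not a convolution: the weight $(2+k)^{-\eta}$ is attached to the running index $k$, not to the difference $j-k$. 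With $h(m)=(2+m)^{-\eta}\mathbf 1_{m\ge -N_1}$ and $\gamma_k=\|\widetilde\Delta_k b\|_{L^\infty}$ one has $(h*\gamma)(j)=\sum_{k\le j+N_1}(2+j-k)^{-\eta}\gamma_k$, a different sum, so Young's inequality $\ell^1*\ell^2\hookrightarrow\ell^2$ does not apply. (The genuine convolution structure here is $S_j=(\mathbf 1_{(-\infty,N_1]}*f)(j)$ with $f(k)=(2+k)^{-\eta}\gamma_k$, and $\mathbf 1_{(-\infty,N_1]}\notin\ell^1$, which only yields an $\ell^\infty$ bound.)

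The conclusion is nevertheless true and the repair is one line; it is what the paper does. Apply Cauchy--Schwarz in $k$, namely $S_j^2\le\bigl(\sum_{k\ge j-N_1}(2+k)^{-\eta}\bigr)\bigl(\sum_{k\ge j-N_1}(2+k)^{-\eta}\gamma_k^2\bigr)$, bound the first factor by a constant (this is where $\eta>1$ enters), then sum over $j$ and exchange the order of summation, which costs a factor $(2+k)$ and leaves $\sum_k(2+k)^{1-\eta}\gamma_k^2\lesssim\|b\|_{B^0_{\infty,2}}^2$. The paper's version splits the summand as $(2+q)^{\eta}\|\Delta_q a\|_{L^\infty}\cdot(2+q)^{-\eta}\|\widetilde\Delta_q b\|_{L^\infty}$ before Cauchy--Schwarz, which is the same idea. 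Finally, your closing remark that only the remainder forces $\eta>1$ is inconsistent with your own estimate of $T_b a$, which also requires $(2+j)^{\frac12-\eta}\in\ell^2_j$, i.e.\ $\eta>1$; the only term that gets by with the embedding $B^{\eta\ln}_{\infty,1}\subset B^0_{\infty,1}$ alone is $T_a b$.
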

\begin{proof}
We use Bony's decomposition that \beqo ab=T_a b+ T_b a + R(a,b).
\eeqo For the first term, we have \beno \|\D_j T_a b\|_{L^\infty}
&\leq& \sum_{|j-q|\leq N}\|S_{q-1}a\|_{L^\infty}\|\D_q
b\|_{L^\infty}\\
&\leq& \|b\|_{B^0_{\infty,2}}\sum_{|j-q|\leq N} c_{q,2}\sum_{k\leq
q-2}\|\D_k a\|_{L^\infty}\\
&\leq&\|a\|_{B^{\eta\ln}_{\infty,1}}\|b\|_{B^0_{\infty,2}}\sum_{|j-q|\leq
N} c_{q,2}\sum_{k\leq q-2} c_{k,1}(2+k)^{-\eta}\\
&\lesssim& c_{j,2}
\|a\|_{B^{\eta\ln}_{\infty,1}}\|b\|_{B^0_{\infty,2}}, \eeno where we
use $\eta>1.$

To deal with $T_b a,$ one has \beno \|\D_j T_b a\|_{L^\infty} &\leq&
\sum_{|j-q|\leq N}\|\D_q
a\|_{L^\infty}\sum_{-1\leq k\leq q-2}\|\D_k b\|_{L^\infty}\\
&\leq&
\|a\|_{B^{\eta\ln}_{\infty,1}}\|b\|_{B^0_{\infty,2}}\sum_{|j-q|\leq
N} (2+q)^{-\eta}\sqrt{q}\\
&\lesssim&\|a\|_{B^{\eta\ln}_{\infty,1}}\|b\|_{B^0_{\infty,2}}\sum_{|j-q|\leq
N} (2+q)^{-(\eta-\f12)}\\
&\lesssim& c_{j,2}
\|a\|_{B^{\eta\ln}_{\infty,1}}\|b\|_{B^0_{\infty,2}}, \eeno where we
use again $\eta>1$ so that $\sum_{q\geq-1}
(2+q)^{-(2\eta-1)}<\infty.$

For the last term, we obtain that \beno \sum_{j\geq -1}\|\D_j
R(a,b)\|_{L^\infty}^2 &\leq& \sum_{j\geq -1} (\sum_{q\geq j-N}
\|\D_q a\|_{L^\infty}\|\tilde{\D}_q b\|_{L^\infty})2\\
&\leq& \sum_{j\geq -1} (\sum_{q\geq j-N} (2+q)^{2\eta}\|\D_q
a\|_{L^\infty}^2)  (\sum_{q\geq j-N} (2+q)^{-2\eta}\|\D_q
b\|_{L^\infty}^2)\\
&\leq& \|a\|_{B^{\eta\ln}_{\infty,1}}^2 \sum_{j\geq -1}\sum_{q\geq
j-N} (2+q)^{-2\eta}\|\D_q b\|_{L^\infty}^2\\
&\leq& \|a\|_{B^{\eta\ln}_{\infty,1}}^2 \|b\|_{B^0_{\infty,2}}^2
\sum_{q\geq-1} (2+q)^{-2\eta}c_{q,2}^2\sum_{-1\leq j\leq q+N}1\\
&\lesssim& \|a\|_{B^{\eta\ln}_{\infty,1}}^2
\|b\|_{B^0_{\infty,2}}^2\sum_{q\geq-1} (2+q)^{-(2\eta-1)}\\
&\lesssim& \|a\|_{B^{\eta\ln}_{\infty,1}}^2
\|b\|_{B^0_{\infty,2}}^2.\eeno By summing up the above estimates, we
get \eqref{product}.
\end{proof}
Now we are at the position to proof Theorem \ref{mainthm3}.

\no\begin{proof} We rewrite the momentum equation of \eqref{INS} as
\beqo \mu_0\tri u = \P(\rho u_t +\rho u\na u )
-\P\dive\bigl((\mu(\rho)-\mu_0)\cM(u)\bigr),\eeqo from which, we get
\beqo \mu_0(I-S_0)\na u = \na
(-\tri)^{-1}\P\dive(I-S_0)\bigl((\mu(\rho)-\mu_0)\cM(u)\bigr) - \na
(-\tri)^{-1}\P(I-S_0)(\rho u_t +\rho u\na u). \eeqo Now we can
estimate $\na u$ in the norm of $L^1_t(B^0_{\infty,2}).$ Note that
\beqo \|\na (-\tri)^{-1}(I-S_0) f\|_{B^0_{\infty,2}}\leq
C\|f\|_{L^2}, \eeqo and recall \eqref{product}, we obtain that \beno
\mu_0\|(I-S_0)\na u\|_{L^1_t(B^0_{\infty,2})} &\leq&
C\|\mu(\rho)-\mu_0\|_{L^\infty_t(B^{\eta\ln}_{\infty,1})}\|\na
u\|_{L^1_t(B^0_{\infty,2})} \\
&&+ C(\|u_t\|_{L^1_t(L^2)}+ \|u\na
u\|_{L^1(L^2)})\\
&\leq&C\|\mu(\rho)-\mu_0\|_{L^\infty_t(B^{\eta\ln}_{\infty,1})}\Bigl(\|(I-S_0)\na
u\|_{L^1_t(B^0_{\infty,2})} +\|S_0\na
u\|_{L^1_t(L^\infty)} \Bigr)\\
&&+ C(\|u_t\|_{L^1_t(L^2)}+ \|u\na u\|_{L^1(L^2)}). \eeno Let $c_1$
be a small enough positive constant, which will be determined later
on, we denote \beq\label{lifetime} \bar{T}\eqdefa \sup\Big\{t;
\|\mu(\rho)-\mu_0\|_{L^\infty_t(B^{\eta\ln}_{\infty,1})} \leq
c_1\mu_0\Big\}. \eeq Then for any $t\leq \bar{T},$ the assumption
\eqref{smallassume1} holds and \beqo \mu_0\|\na
u\|_{L^1_t(B^0_{\infty,2})}\leq C\Bigl(\mu_0\|S_0\na
u\|_{L^1_t(L^\infty)}+\|u_t\|_{L^1_t(L^2)}+ \|u\na
u\|_{L^1(L^2)}\Bigr). \eeqo Note that $p<\f43,$ we can find some
positive $\ep$ such that $\f12+2\b(p)-2\ep>1.$ Then using
interpolation \eqref{gradu}, and decay estimates \eqref{decay1},
\eqref{decay3}, we obtain that \beno \|S_0\na u\|_{L^1_t(L^\infty)}
&\lesssim&\|\na
u\|_{L^1_t(L^4)}\lesssim (\int_0^t(s+e)^{\f12+2\b(p)-2\ep}\|\na u\|_{L^4}^2\,ds)^{\f12}(\int_0^t(s+e)^{-(\f12+2\b(p)-2\ep)}\,ds)^{\f12}\\
&\lesssim& (\int_0^t(s+e)^{\f12+2\b(p)-2\ep}\|\na u\|_{L^2}\|\rho
u_t+\rho u\na u\|_{L^2}\,ds)^{\f12}\\
&\lesssim&\Bigl((K^{\f12}+K^{\f72})\exp(K^2)\int_0^t(s+e)^{\b(p)-\ep}\|\rho
u_t+\rho u\na u\|_{L^2}\,ds\Bigr)^{\f12}\\
&\lesssim&\Bigl((K^{\f12}+K^{\f72})\exp(K^2)\bigl(\int_0^t(s+e)^{1+2\b(p)-\ep}\|\rho
u_t+\rho u\na
u\|_{L^2}^2\,ds\bigr)^{\f12}\bigl(\int_0^t(s+e)^{-1-\ep}\,ds\bigr)^{\f12}\Bigr)^{\f12}\\
&\lesssim&(K^{\f12}+K^{\f72})\exp(K^2).\eeno Combining
\eqref{decay2}, we get that \beqo \mu_0 \|\na
u\|_{L^1_t(B^0_{\infty,2})} \leq
C\bigl(\mu_0(K^{\f12}+K^{\f72})\exp(K^2)+K^{\f12} +K\bigr).\eeqo
Recall the definition of $K$ and $G(\rho_0,u_0),$ we deduce that
\beq\label{5.1} \mu_0\|\na u\|_{L^1_t(B^0_{\infty,2})} \leq
C(1+\mu_0)G(\rho_0,u_0)\exp\{\exp(C\|u_0\|_{L^2}^4)\}. \eeq Now,
substituting \eqref{5.1} into \eqref{rhoest} leads to
\beq\label{5.2}\begin{split}
\|\mu(\rho)-\mu_0\|_{L^\infty_t(B^{\eta\ln}_{\infty,1})} &\leq
C\|\mu(\rho_0)-\mu_0\|_{B^{(\eta+1)\ln}_{\infty,1}}\bigl(\|\na
u\|_{L^1_t(B^0_{\infty,2})}\bigr)^{\eta+1}\\
&\leq
C\|\mu(\rho_0)-\mu_0\|_{B^{(\eta+1)\ln}_{\infty,1}}\Bigl(\f{(1+\mu_0)G(\rho_0,u_0)}{\mu_0}\Bigr)^{\eta+1}\exp\big\{(\eta+1)\exp(C\|u_0\|_{L^2}^4)\big\}\\
&\leq \f{c_1}2 \mu_0,
\end{split}\eeq
as long as $C_0$ is sufficiently large and $c_0$  small enough in
\eqref{smallassume2}. This contradicts with \eqref{lifetime} and it
in turn shows that $\bar{T}=\infty.$ So the Theorem is proven.
\end{proof}

\end{document}